\newcounter{minutes}\setcounter{minutes}{\time}
\newcounter{hours}\setcounter{hours}{\time}
\theoremstyle{plain}
\newtheorem{thm}[equation]{Theorem}
\newtheorem{cor}[equation]{Corollary}
\newtheorem{lem}[equation]{Lemma}
\theoremstyle{definition}
\newtheorem{defn}[equation]{Definition}
\newtheorem{rem}[equation]{Remark}
\newtheorem{nonsec}[equation]{}
\newcommand{\R}{{\mathbb R}}
\newcommand{\Rn}{{\R}^n}
\newcommand{\B}{{{\mathbb B}^2}}
\newcommand{\Bn}{{{\mathbb B}^n}}
\newcommand{\Hn}{{\mathbb H}^n}
\newcommand{\bdr}{\partial}
\newcommand{\ang}{\measuredangle}
\newcommand{\K}{{\mathcal K}}
\newcommand{\E}{{\mathcal E}}
\def\be{\begin{equation}}
\def\ee{\end{equation}}
\def\tang{\mathrel{\hbox{\rlap{%
\kern 0.2ex\hbox to 1.8ex{\hrulefill}}\raise2.6pt\hbox{$\bigcirc$}}}}
\DeclareMathOperator*{\arth}{arth}     
\DeclareMathOperator{\ray}{ray}
\DeclareMathOperator{\comp}{comp}
\numberwithin{equation}{section}
\begin{document}

\def\thefootnote{}
\footnotetext{ \texttt{\tiny File:~\jobname .tex,
          printed: \number\year-\number\month-\number\day,
          \thehours.\ifnum\theminutes<10{0}\fi\theminutes}
} \makeatletter\def\thefootnote{\@arabic\c@footnote}\makeatother

\title{The visual angle metric and quasiregular maps}

\author{Gendi Wang}
\address{School of Sciences, Zhejiang Sci-Tech University, Hangzhou 310018, China}
\email{gendi.wang@zstu.edu.cn}
\author{Matti Vuorinen}
\address{Department of Mathematics and Statistics, University of Turku, Turku 20014, Finland}
\email{vuorinen@utu.fi}

\maketitle

\begin{abstract}
The distortion of distances between points under maps is studied. We first prove a Schwarz-type lemma for quasiregular maps of the unit disk involving the visual angle metric. Then we investigate conversely the quasiconformality of a bilipschitz map with respect to the visual angle metric on convex domains. For the unit ball or half space, we prove that a  bilipschitz map with respect to the visual angle metric is also bilipschitz with respect to the hyperbolic metric.
We also obtain various inequalities relating the visual angle metric to other metrics such as the distance ratio metric and the quasihyperbolic metric.

\end{abstract}

{\small \sc Keywords.} {the visual angle metric, the hyperbolic metric, Lipschitz map, quasiregular map }

{\small \sc 2010 Mathematics Subject Classification.} {30C65 (30F45)}

\section{Introduction}

One of the main problems of geometric function theory is to study the way in which maps distort distances between points. The standard method to discuss this problem is to study uniform continuity of maps between suitable metric spaces. For example, one of the cornerstones of geometric function theory, the Schwarz Lemma originally formulated for bounded analytic functions of the unit disk, has been extended to several other classes of functions and to several metrics other than the Euclidean metric.

Let $G \subsetneq \Rn$ be a domain and $x,y \in G$. The {\it visual angle metric} is defined as
$$v_G(x,y)=\sup_{z\in\partial G}\ang(x,z,y)\,,$$
where $\partial G$ is not a proper subset of a line.
This paper is based on our earlier paper  \cite{klvw}, where we introduced this metric and studied
some of the basic properties of the visual angle metric.
In particular, we gave some estimates for the visual angle metric
in terms of the hyperbolic metric in the case when the domain is either the unit ball or the upper half space.
Our goal here is to study how the visual angle metric behaves under quasiconformal maps. The main result is the following theorem.
\begin{thm}\label{vs}
If $f:\B\rightarrow \R^2$ is a non-constant $K$-quasiregular map with $f\B\subset\B$, then
$$v_{\B}(f(x),f(y))\le C(K) \max\{v_{\B}(x,y),\,v_{\B}(x,y)^{1/K}\}$$
for all $x,\,y\in\B$, where 
$C(K)=2\cdot 4^{1-1/K}$ and $C(1)=2$.
\end{thm}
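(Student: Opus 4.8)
The plan is to route the estimate through the hyperbolic metric $\rho_{\B}$ of the disk, where a Schwarz-type lemma for quasiregular maps is available, and then to translate back to $v_{\B}$ using the comparison between the two metrics established in \cite{klvw}. Abbreviate $t=\tnh\frac{\rho_{\B}(x,y)}{2}$ and $t'=\tnh\frac{\rho_{\B}(f(x),f(y))}{2}$. Two tools drive the argument. First, the quasiregular Schwarz lemma: since $f$ is a non-constant $K$-quasiregular self-map of $\B$, it gives $t'\le\varphi_K(t)$, where $\varphi_K$ is the distortion function of the modulus, together with its standard majorant $\varphi_K(r)\le 4^{\,1-1/K}r^{1/K}$, the constant $4=\lambda_2$ being the planar Grötzsch constant. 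Second, the two-sided comparison from \cite{klvw}, which I shall use in the form
\[ \frac{v_{\B}(x,y)}{2}\le \asin\!\left(\tnh\frac{\rho_{\B}(x,y)}{2}\right)\le v_{\B}(x,y). \]

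The argument then proceeds in three moves. The left-hand inequality of the comparison, applied to the image pair, gives $v_{\B}(f(x),f(y))\le 2\,\asin t'$; the Schwarz lemma gives $t'\le\varphi_K(t)$; and the right-hand inequality applied to $(x,y)$ gives $\asin t\le v_{\B}(x,y)$, hence $t\le\sin v_{\B}(x,y)$ whenever $v_{\B}(x,y)\le\pi/2$. Feeding these into one another, and using that $\varphi_K$ and $\asin$ are increasing, I obtain the master estimate
\[ v_{\B}(f(x),f(y))\le 2\,\asin\!\big(\varphi_K(\sin v_{\B}(x,y))\big), \]
valid in the range $v_{\B}(x,y)\le\pi/2$. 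When $v_{\B}(x,y)>\pi/2$ the claimed right-hand side already exceeds $\pi$, so the inequality is trivial because $v_{\B}<\pi$ on $\B$.

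It remains to dominate the master estimate by $C(K)\max\{v_{\B}(x,y),v_{\B}(x,y)^{1/K}\}$ with $C(K)=2\cdot 4^{1-1/K}$. Writing $s=v_{\B}(x,y)$, this reduces to the one-variable inequality
\[ \asin\!\big(\varphi_K(\sin s)\big)\le 4^{\,1-1/K}\max\{s,\,s^{1/K}\}, \]
which I would split according to whether $s\le 1$ or $s>1$: in the first regime the majorant $\varphi_K(\sin s)\le 4^{1-1/K}(\sin s)^{1/K}\le 4^{1-1/K}s^{1/K}$ produces both the term $s^{1/K}$ and the constant $4^{1-1/K}$, while in the second regime the term $s$ dominates, the $\max$ being precisely what reconciles the two cases. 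The normalization $C(1)=2$ is forced by the factor $2$ separating the two sides of the comparison, and records that conformal automorphisms of $\B$ are not isometries of $v_{\B}$. The main obstacle is exactly this last reduction. The clean majorant $\varphi_K(r)\le 4^{1-1/K}r^{1/K}$ already exceeds $1$ for moderate $r$, where it is vacuous and cannot by itself control $\asin(\varphi_K(\sin s))$; one must instead exploit the genuine boundedness $\varphi_K<1$ of the distortion function in that range, together with the monotonicity and concavity of $\varphi_K$, $\sin$ and $\asin$. Equivalently, the difficulty is that the passage $\sin\mapsto\asin$ a priori costs a factor $\pi/2$, and removing this loss so as to land exactly on the sharp constant $2\cdot 4^{1-1/K}$ is where the real work lies.
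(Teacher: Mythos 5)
Your framework is exactly the paper's: pass to the hyperbolic metric via the two\-/sided comparison $\tfrac12 v_{\B}\le\arcsin\bigl(\tnh\tfrac{\rho_{\B}}{2}\bigr)\le v_{\B}$ (the paper states this as $\rho^*\le v\le 2\rho^*$ with $\rho^*=\arctan(\sh\tfrac{\rho}{2})$, which is the same function), apply the quasiregular Schwarz lemma $t'\le\varphi_K(t)$, and reduce to a one-variable inequality. Your master estimate and the disposal of the range $v_{\B}(x,y)>\pi/2$ are both correct. But the proof is not complete: the decisive step, namely
\[
\arcsin\bigl(\varphi_K(\sin s)\bigr)\le 4^{1-1/K}\max\{s,\,s^{1/K}\},
\]
is exactly what you leave unproved, and the route you sketch for it does not work. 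The majorant $\varphi_K(r)\le 4^{1-1/K}r^{1/K}$ sits \emph{inside} the increasing, superadditive-on-$[0,1]$ function $\arcsin$, and $\arcsin u\ge u$, so chaining the majorant through $\arcsin$ produces an inequality in the wrong direction; as you yourself note, naively one only gets the constant $\tfrac{\pi}{2}\cdot 4^{1-1/K}$. Acknowledging that ``this is where the real work lies'' does not discharge it --- that step \emph{is} the theorem.

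For comparison, the paper does this work in its Lemmas on special functions. Writing $r=\sin s$, $s=\varphi_K(r)$, $r'=\sqrt{1-r^2}$, $s'=\sqrt{1-s^2}$, the needed inequality is the boundedness by $4^{1-1/K}$ of $f_2(r)=\arctan(s/s')/(\arctan(r/r'))^{1/K}$ on the range where $\arcsin r\le 1$, and of $f_1(r)=\arctan(s/s')/\arctan(r/r')$ where $\arcsin r\ge 1$. The paper proves that both are strictly \emph{decreasing} on $(0,1)$: for $f_1$ by the monotone l'H\^opital rule together with the facts that $\varphi_K(r)/r$ decreases (a consequence of concavity of $\varphi_K$, not of the power majorant) and that $s'\K(s)^2/(r'\K(r)^2)$ decreases; for $f_2$ by logarithmic differentiation using the explicit formula for $\partial\varphi_K/\partial r$ and the monotonicity of $r r'\K(r)^2/\arctan(r/r')$, which in turn rests on properties of the complete elliptic integrals $\K$ and $\E$. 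The value $4^{1-1/K}$ then emerges as the limit $f_2(0^+)$, via $\varphi_K(r)\sim 4^{1-1/K}r^{1/K}$ as $r\to0^+$, and a separate lemma shows $f_1(r_0)\le 4^{1-1/K}$ at the crossover point $r_0=\sin 1$. None of this machinery (monotone l'H\^opital, derivative formulas for $\varphi_K$, elliptic-integral monotonicity) appears in your proposal, and without some substitute for it the constant $2\cdot 4^{1-1/K}$ is not obtained.
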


\begin{rem}
It is clear that the visual angle metric is similarity invariant. It is not difficult to show that it fails to be  M\"obius invariant. However, by Lemma \ref{mthm1} the visual angle metric is not changed by more than a factor $2$ under the M\"obius transformations of $G$ onto $G'$ for $G\,,G'\in\{\Bn,\Hn\}$.
For M\"obius transformations of the unit disk onto itself, we know
that the best constant in place of $C(K)$ is 2 by \cite[Theorem 1.2]{klvw}. Therefore, we see that
the constant $C(K)$ is asymptotically sharp when $K \to 1\,.$

\end{rem}

Moreover, we prove


\begin{thm}\label{mthf}
Let $G_1\,, G_2$ be proper convex subdomains of $\Rn$. Let $f: G_1\rightarrow G_2=f(G_1)$ be an $L$-bilipschitz map with respect to the visual angle metric. Then $f$ is quasiconformal and with linear dilatation at most $4L^2$.
\end{thm}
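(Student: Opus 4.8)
The plan is to verify the metric definition of quasiconformality directly, by bounding the linear dilatation
$$H(x,f)=\limsup_{r\to 0}\frac{\max_{|y-x|=r}|f(y)-f(x)|}{\min_{|y-x|=r}|f(y)-f(x)|}$$
at every point $x\in G_1$ and showing $H(x,f)\le 4L^2$. The whole argument reduces to comparing the visual angle metric with the Euclidean distance infinitesimally. As a by-product, that comparison shows $v_{G_i}$ is locally comparable to $|\cdot|$, hence induces the Euclidean topology, so that the $L$-bilipschitz map $f$ is automatically a homeomorphism onto $G_2$ with $f^{-1}$ again $L$-bilipschitz; once the linear dilatation is bounded, quasiconformality follows from the metric characterization of quasiconformal maps.

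The crux, which I expect to be the main obstacle, is the following local comparison. For a proper convex domain $G$, a point $x\in G$ with $d=\dist(x,\partial G)$, and $y\to x$, I claim that, uniformly in the direction $(y-x)/|x-y|$,
$$\frac{|x-y|}{2d}\,(1-o(1))\le v_G(x,y)\le \frac{|x-y|}{d}\,(1+o(1)).$$
The upper bound is the easy half: for each $z\in\partial G$ the law of sines in the triangle $xzy$ gives $\sin\ang(x,z,y)\le |x-y|/|y-z|\le (|x-y|/d)(1+o(1))$, and for small $|x-y|$ all these angles are small, so their supremum obeys the same bound. The lower bound is where the work lies. I would slice $G$ by the $2$-plane $P$ through $x$ spanned by the chosen direction and the inner normal at the nearest boundary point $z_0$, reducing the estimate to a planar convex domain $\Omega=G\cap P$ with $B(x,d)\subseteq\Omega$ lying on one side of the supporting line at $z_0$. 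Placing $x=(0,d)$ and $z_0=(0,0)$, the point $z_0$ already contributes $v_G(x,y)\gtrsim (\sin\alpha)|x-y|/d$, where $\alpha$ is the angle to the normal, which settles every direction that is not nearly normal. In the remaining case one follows the ray from $x$ meeting $\partial\Omega$ along the line $z_1+z_2=d$; its exit point $z^\ast$ satisfies $d-z_2^\ast=z_1^\ast\le d$ (because $\Omega$ lies above the supporting line), and a direct computation shows that $z^\ast$ subtends an angle $\sim |x-y|/(2z_1^\ast)\ge |x-y|/(2d)$. The half-plane is the extremal configuration, and this is exactly what produces the constant $2$.

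With the comparison in hand the conclusion is a short computation. Writing $x'=f(x)$, $d_1=\dist(x,\partial G_1)$, $d_2=\dist(x',\partial G_2)$ and $s=|f(y)-f(x)|$, the upper estimate at $x'$ combined with the $L$-bilipschitz inequality gives $s\le 2Ld_2|x-y|/d_1\,(1+o(1))$, while the lower estimate at $x'$ gives $s\ge (d_2/(2L))\,|x-y|/d_1\,(1-o(1))$. The common factor $d_2/d_1$ cancels upon forming the ratio of the maximal to the minimal stretch over the sphere $|y-x|=r$, leaving
$$H(x,f)\le\frac{2L}{1/(2L)}=4L^2.$$

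Finally, this bound holds at every $x\in G_1$, and $f$ is a homeomorphism onto $G_2$; by the metric characterization of quasiconformality a homeomorphism with uniformly bounded linear dilatation is quasiconformal, here with linear dilatation at most $4L^2$, which is the assertion. The only genuinely delicate point is the uniform lower bound in the comparison lemma; everything else is either the elementary upper estimate or formal manipulation of the bilipschitz inequalities.
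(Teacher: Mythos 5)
Your overall strategy coincides with the paper's: establish the infinitesimal two-sided comparison $\tfrac{|x-y|}{2d(x)}(1-o(1))\le v_G(x,y)\le \tfrac{|x-y|}{d(x)}(1+o(1))$ on a convex domain, and then run the linear-dilatation computation in which the ratio $2$ of the two comparison constants enters squared to give $4L^2$. The paper packages this comparison as Theorem \ref{bcb}, $\arcsin\frac{t}{t+2}\le v_G(x,y)\le 2\arcsin\frac{t}{\sqrt{4+t^2}}$ with $t=e^{j_G(x,y)}-1$, proved by sandwiching $G$ between the convex hull of two balls of radius $d(x)$ and the supporting half-space $H_G$ at the nearest boundary point, and evaluating $v$ on these two model domains. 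You instead prove the upper bound by the law of sines (which, as you note, needs no convexity) and the lower bound by exhibiting explicit boundary points; the final dilatation computation is then identical in substance and yields the same constant. (Minor slip: you have swapped the labels in that computation --- it is the \emph{lower} comparison bound at $x'$ that gives the upper bound on $s=|f(y)-f(x)|$, and the upper bound at $x'$ that gives the lower bound on $s$; your displayed inequalities are nevertheless the correct ones.)

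The one genuine soft spot is the dichotomy in your lower bound. The nearest boundary point $z_0$ gives roughly $\sin\alpha\cdot|x-y|/d$, which reaches $|x-y|/(2d)$ only when $\sin\alpha\ge 1/2$, while your $45^{\circ}$-ray estimate delivers $|x-y|/(2z_1^{*})$ only for the exactly normal direction: for a direction with $\sin\alpha$ just below $1/2$, the angle at $x$ between $[x,y]$ and your fixed ray can be as small as $15^{\circ}$, and the subtended angle drops below $|x-y|/(2d)$. As written you therefore obtain quasiconformality, but not the stated constant $4L^2$. The repair is cheap and stays within your framework: of the two $45^{\circ}$ rays from $x$ (directions $(\pm1,-1)/\sqrt2$ in your coordinates, which are orthogonal to each other), at least one makes an angle in $[45^{\circ},135^{\circ}]$ with $[x,y]$; taking that one, its exit point $z^{*}$ still satisfies $z_1^{*}\le d$ because $\Omega$ lies above the supporting line, and the law of sines gives
$$\sin\ang(x,z^{*},y)\ \ge\ \frac{|x-y|}{\sqrt2\,|y-z^{*}|}\ \ge\ \frac{|x-y|}{2d+\sqrt2\,|x-y|}\ =\ \frac{|x-y|}{2d}\,(1-o(1))$$
for \emph{every} direction, so no case analysis is needed. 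Alternatively one can follow the paper and simply use $v_G\ge v_{H_G}$, where on the half-space the bound $\arcsin\frac{t}{t+2}$ holds exactly.
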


\begin{thm}\label{mths}
 For $G_1, G_2\in \{\Bn, \Hn\}$, let $f: G_1\rightarrow G_2$ be a bilipschitz map with respect to the visual angle metric. Then $f$ is also a bilipschitz map with respect to the hyperbolic metric.
\end{thm}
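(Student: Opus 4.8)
The plan is to reduce the global hyperbolic bilipschitz estimate to an infinitesimal comparison between the visual angle metric and the hyperbolic metric $\rho_G$, and then to globalize by a chaining argument along hyperbolic geodesics. Throughout let $d(x)=\dist(x,\partial G)$ and let $L$ be the bilipschitz constant of $f$ with respect to the visual angle metric, so that $v_{G_1}(x,y)/L\le v_{G_2}(f(x),f(y))\le L\,v_{G_1}(x,y)$ for all $x,y\in G_1$; note that $f^{-1}$ satisfies the same inequalities with the same $L$. Since $v_G$ induces the Euclidean topology on $G\in\{\Bn,\Hn\}$, the hypothesis makes $f$ a homeomorphism of $G_1$ onto $G_2$.

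First I would establish the local comparison: there are constants $c_1,c_2>0$ and a threshold $t_0>0$, depending only on $n$, such that for $G\in\{\Bn,\Hn\}$
$$c_1\,\frac{|x-y|}{d(x)}\le v_G(x,y)\le c_2\,\frac{|x-y|}{d(x)}\qquad\text{whenever } |x-y|\le\tfrac12\,d(x).$$
The upper bound is elementary, since a segment of length $|x-y|$ viewed from any boundary point subtends an angle at most comparable to $|x-y|/d(x)$. For the lower bound one exhibits a single boundary point from which the subtended angle is of the right size; the delicate case is when $y-x$ points almost radially, where a short computation (carried out most transparently in $\Hn$, maximizing the subtended angle over boundary points) shows the optimal viewing point still yields an angle $\asymp|x-y|/d(x)$. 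Combining this with the standard estimates $k_G(x,y)\asymp|x-y|/d(x)$ for $|x-y|\le\frac12 d(x)$ and $k_G\asymp\rho_G$ on $\Bn$ and $\Hn$ (as established in \cite{klvw} and in the present paper) yields $v_G(x,y)\asymp\rho_G(x,y)$ for small distances; in particular $v_G(x,y)$ is small if and only if $\rho_G(x,y)$ is small, with quantitative control both ways. I expect this step to be the main obstacle, since it is the only place where the specific geometry of $v_G$ genuinely enters.

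With the local comparison in hand, the globalization is a chaining argument. Given $x,y\in G_1$, join them by the hyperbolic geodesic and choose points $x=x_0,x_1,\dots,x_m=y$ in order along it with $\rho_{G_1}(x_{j},x_{j+1})\le r_0$, where $r_0$ is fixed small enough that $\rho_{G_1}(x_j,x_{j+1})\le r_0$ forces $|x_j-x_{j+1}|\le\frac12 d(x_j)$ and also keeps $L\cdot v_{G_1}(x_j,x_{j+1})$ below the threshold $t_0$. Applying the local comparison in $G_2$ (valid because $v_{G_2}(f(x_j),f(x_{j+1}))<t_0$), then the visual-angle bilipschitz inequality, then the local comparison in $G_1$, we obtain
$$\rho_{G_2}(f(x_j),f(x_{j+1}))\lesssim v_{G_2}(f(x_j),f(x_{j+1}))\le L\,v_{G_1}(x_j,x_{j+1})\lesssim L\,\rho_{G_1}(x_j,x_{j+1}).$$
Summing over $j$ and using additivity of $\rho_{G_1}$ along the geodesic gives $\rho_{G_2}(f(x),f(y))\le L'\,\rho_{G_1}(x,y)$, with $L'$ depending only on $n$ and $L$.

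Finally, the lower bound follows by symmetry. Since $f^{-1}$ is also $L$-bilipschitz with respect to the visual angle metric and $G_1,G_2\in\{\Bn,\Hn\}$, the same argument applied to $f^{-1}$ gives $\rho_{G_1}(f^{-1}(u),f^{-1}(w))\le L'\,\rho_{G_2}(u,w)$ for all $u,w\in G_2$; taking $u=f(x)$ and $w=f(y)$ yields $\rho_{G_2}(f(x),f(y))\ge\rho_{G_1}(x,y)/L'$. Together the two estimates show that $f$ is $L'$-bilipschitz with respect to the hyperbolic metric, which completes the proof.
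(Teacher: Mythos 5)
Your proof is correct, and its skeleton --- a two-sided local comparison between $v_G$ and $\rho_G$ for nearby points, a chaining argument along hyperbolic geodesics exploiting the additivity of $\rho$ on geodesic segments, and the remark that the lower bound follows by running the same argument for $f^{-1}$ --- is exactly the skeleton of the paper's proof (Lemma \ref{lerho2}, Theorem \ref{thrho} and Corollaries \ref{corth21}--\ref{corth23}). Where you genuinely differ is in how the local estimate is produced. You derive $v_G(x,y)\asymp |x-y|/d(x)\asymp \rho_G(x,y)$ for $|x-y|\le\frac12 d(x)$ by direct Euclidean angle estimates; this is sound (it is essentially the convex-domain comparison of Theorem \ref{bcb}), but note that the step you flag as the main obstacle, the lower bound in the almost-radial case, is already available for free from Lemma \ref{mthm1}: the inequality $\rho_G^*(x,y)\le v_G(x,y)\le 2\rho_G^*(x,y)$ with $\rho_G^*=\arctan\bigl(\sh\frac{\rho_G}{2}\bigr)$ gives $v_G\asymp\rho_G$ at small scales immediately, with no boundary-point construction needed. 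The paper instead normalizes by the M\"obius maps $T_{-x}$ and $T_{f(x)}$ so that both base points sit at the origin, where the exact formulas $v_{\Bn}(0,z)=\arcsin|z|$ and $|T_x(y)|={\rm th}\frac12\rho_{\Bn}(x,y)$ apply; this costs a factor $4$ in the Lipschitz constant of the conjugated map (via Lemma \ref{mthm1}) but yields the explicit constants $4L$ for $\Bn\to\Bn$ and $8L$, $16L$ for the mixed and half-space cases, whereas your constant $L'$ is left unspecified. Your version has the mild advantage of treating all four cases $G_1,G_2\in\{\Bn,\Hn\}$ uniformly, without the extra M\"obius conjugations of the corollaries. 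Both routes are valid.
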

We also prove various inequalities relating the visual angle metric to other metrics such as the distance ratio metric and the quasihyperbolic metric.
Agard and Gehring \cite[Theorems 2 and 3 ]{ag} have studied the transformation of angles under quasiconformal maps. Our results
differ from their work, because in our case the vertex of the angle is on the boundary of the domain of definition of the mapping.

\bigskip

\section{Preliminaries}
\begin{nonsec}{\bf Notation.}
Throughout this paper we will discuss domains $G \subset \mathbb{R}^n$, i.e.,
open and connected subsets of $\mathbb{R}^n$. For $x,y \in G$, the Euclidean distance between
$x$ and $y$ is denoted by $|x-y|$ or $d(x,y)$, as usual. The notation
$d(x,\bdr G)$ or $d(x)$ for abbreviation, stands for the distance from the point $x$ to the boundary $\bdr G$ of the domain $G$.

The Euclidean $n$-dimensional
ball with center $z$ and radius $r$ is denoted by $\Bn(z,r)$, and its boundary
sphere by $S^{n-1}(z,r)$. In particular, $\Bn(r)=\Bn(0,r),\;
S^{n-1}(r)=S^{n-1}(0,r)$, and $\Bn = \Bn(0,1),\; S^{n-1}=S^{n-1}(0,1)$.

The upper Lobachevsky-Poincar\'e $n$-dimensional half space (as a set) is denoted
by $\Hn = \{(z_1,z_2,\cdots, z_n) \in \Rn \,: \,  z_n > 0\}$. For $t\in \R$ and $a\in\Rn\setminus\{0\}$, we denote a hyperplane in $\overline{\Rn}=\Rn\cup\{\infty\}$ by $P(a,t)=\{x\in\Rn: x\cdot a=t\}\cup\{\infty\}$.

Given a vector $u\in\Rn\setminus\{0\}$ and a point $x \in \Rn$, the line passing through $x$ with direction vector $u$ is denoted by $L(x,u)=\{x+tu\ :\ t \in \R\}$ and the ray starting at $x$
with direction $u$ is $\ray(x,u)=\{x+tu\ :\ t>0\}\,.$
Given two points $x$ and $y$, the segment between them is denoted by
$[x,y]=\{(1-t)x+ty\;:\; 0\le t\le1\}$.

Given three distinct points $x,y$ and $z \in \Rn$, the notation $\ang(x,z,y)$
means the angle in the range $[0,\pi]$ between the
segments $[x,z]$ and $[y,z]$.

Let $G$ be a set for which a metric $m_G$ is defined. We define the open $m$-ball $B_m(x,t)$ with center $x$ and radius $t$, and the corresponding boundary $m$-sphere $S_m(x,t)$, in $m$-metric, as the set
$$B_m(x,t)=\{z\in G: m_G(x,z)<t\}$$
and
$$S_m(x,t)=\{z\in G: m_G(x,z)=t\},$$
respectively.
\end{nonsec}

\medskip

\begin{nonsec}{\bf Visual angle metric.}
The visual angle metric has the following monotonicity property:
if $G_1\,,G_2$ are proper subdomains of $\Rn$ and $x\,,y \in G_1 \subset G_2$, then $v_{G_1}(x,y) \ge v_{G_2}(x,y)$.


In the unit ball, for the special case $x\neq0\,,y=0$, we have the following useful formula
\begin{eqnarray}\label{omega0x}
v_{\Bn}(0,x) = \arcsin |x|\in (0,\pi/2),
\end{eqnarray}
and for $|x|=|y|\neq 0$, $\theta=\frac 12 \ang(x,0,y)\in (0, \pi/2]$, we have
\begin{eqnarray}\label{omega1x}
v_{\Bn}(x,y) = 2\arctan\frac{|x|\sin\theta}{1-|x|\cos\theta}.
\end{eqnarray}

\begin{figure}[h]
\subfigure[]{\includegraphics[width=.28 \textwidth]{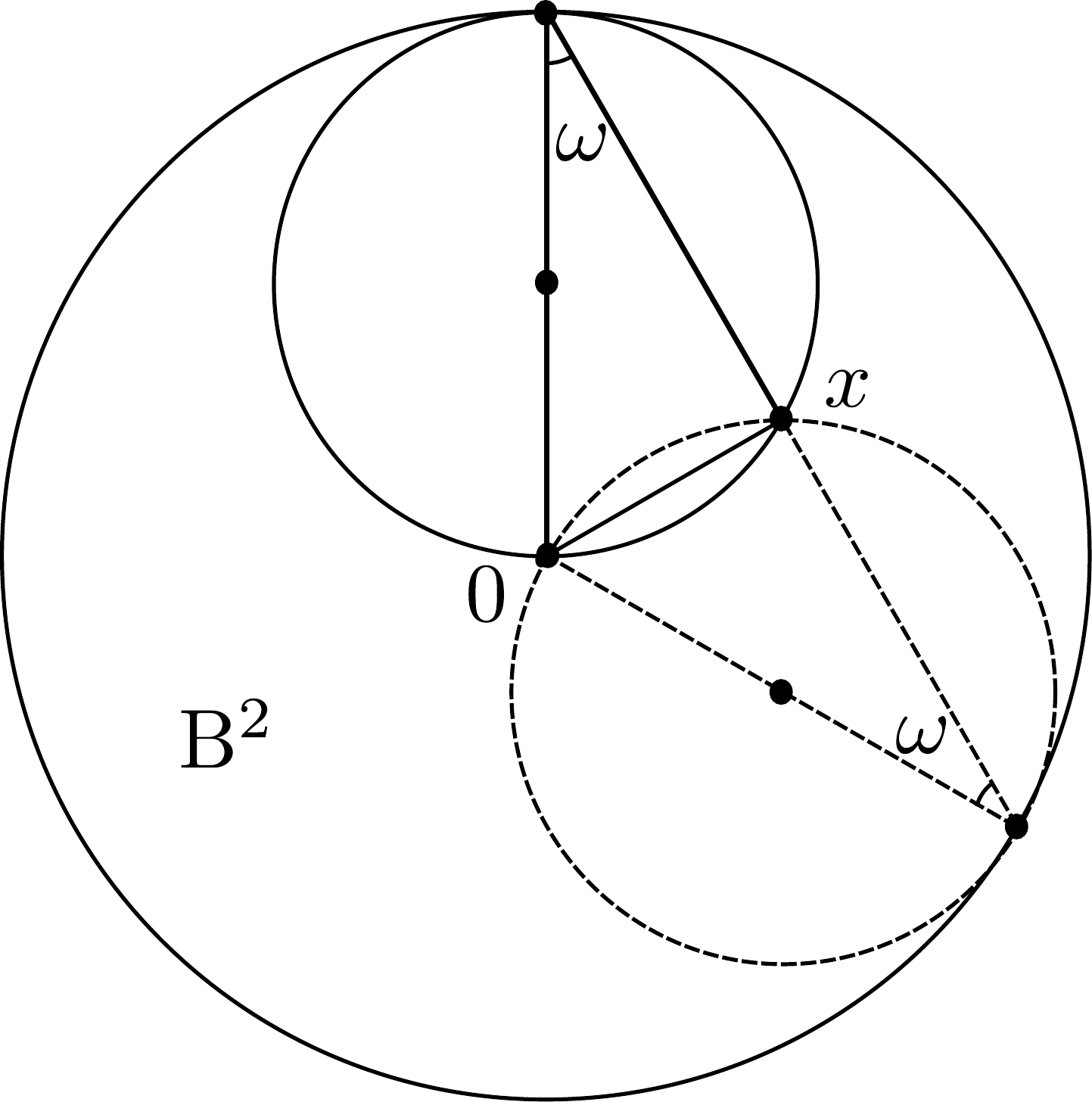}}
\hspace{.1 \textwidth}
\subfigure[]{\includegraphics[width=.28 \textwidth]{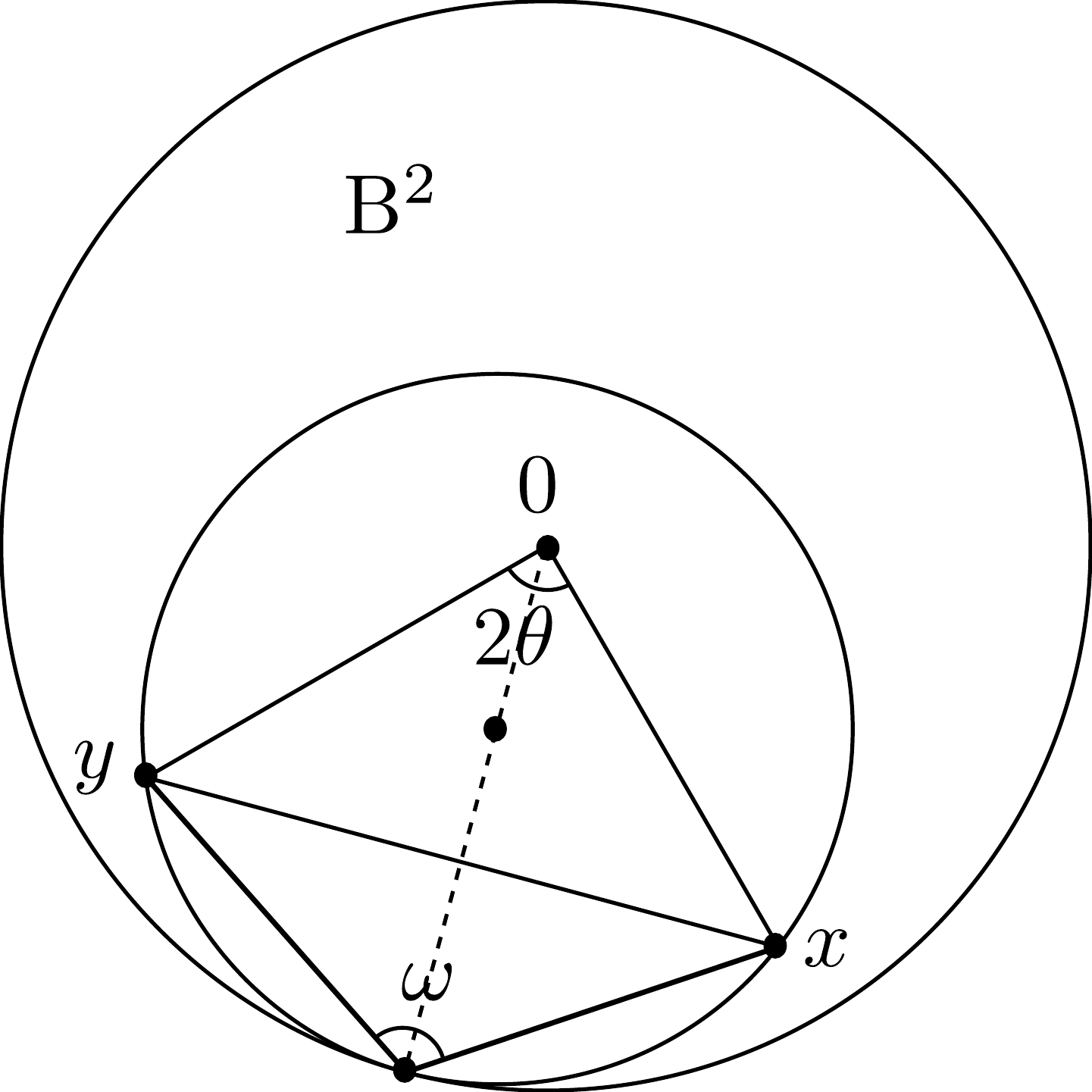}}
\caption{\label{angmetfig34}  Figure in $\B$: (a) Special case \eqref{omega0x}, where $y=0$. (b) Special case \eqref{omega1x}, where $|x|=|y|$ and $\ang(x,0,y)=2\theta$.}
\end{figure}
\end{nonsec}

It is well-known that the diameter of a Euclidean sphere is twice of its radius, but this is not the case for the visual angle metric in $\Bn$.

\begin{rem}\label{red}
Choose $x\in(0, e_1)$, $M=v_{\Bn}(0,x)\in (0,\pi/2)$. Then by (\ref{omega0x}), $B_v(0,M)$ is a Euclidean ball with radius $|x|=\sin M$.
By (\ref{omega1x}), the diameter of the $v$-sphere $S_v(0,M)$ is
$$v_{\Bn}(-x,x)=2\arcsin \frac{|x|}{\sqrt{1+|x|^2}}.$$
Hence it follows that $v_{\Bn}(-x, x)<2M$ and therefore the diameter of $S_v(0,M)$ is less than twice of the radius $M$.
\end{rem}

\medskip

\begin{nonsec}{\bf Hyperbolic metric.} The explicit formulas of the hyperbolic metric are as follows:
\be\label{cosh}
{\rm ch}\rho_{\Hn}(x,y)=1+\frac{|x-y|^2}{2d(x, \bdr \Hn)d(y, \bdr \Hn)}
\ee
for all $x,y\in \Hn$ \cite[p.35]{be1}, and
\be\label{sinh}
{\rm sh}\frac{\rho_{\Bn}(x,y)}{2}=\frac{|x-y|}{\sqrt{1-|x|^2}\sqrt{1-|y|^2}}
\ee
for all $x,y\in \Bn$ \cite[p.40]{be1}.
The hyperbolic metric is invariant under M\"obius transformations of $G$ onto $G'$ for $G,\,G'\in\{\Bn,\Hn\}$.
Hyperbolic geodesics are arcs of circles which are orthogonal to the boundary of the domain.
The problem of finding the midpoint of a hyperbolic segment has been studied in \cite{vw}.

\end{nonsec}

\medskip

\begin{nonsec}{\bf Distance ratio metric.}
For a proper open subset $G \subset {\mathbb R}^n\,$ and for all
$x,y\in G$, the  distance ratio
metric $j_G$ is defined as
\begin{eqnarray*}
 j_G(x,y)=\log \left( 1+\frac{|x-y|}{\min \{d(x,\partial G),d(y, \partial G) \} } \right)\,.
\end{eqnarray*}
The distance ratio metric was introduced by Gehring and Palka
\cite{gp} and in the above simplified form by  Vuorinen \cite{vu1}. Both definitions are
frequently used in the study of hyperbolic type metrics \cite{himps}, geometric theory of functions \cite{vu2}, and quasiconformality in Banach spaces \cite{va2}.

\end{nonsec}

\medskip

\begin{nonsec}{\bf Quasihyperbolic metric.}
Let $G$ be a proper subdomain of ${\mathbb R}^n\,$. For all $x,\,y\in G$, the quasihyperbolic metric $k_G$ is defined as
$$k_G(x,y)=\inf_{\gamma}\int_{\gamma}\frac{1}{d(z,\partial G)}|dz|,$$
where the infimum is taken over all rectifiable arcs $\gamma$ joining $x$ to $y$ in $G$ \cite{gp}.
The quasihyperbolic metric has found many applications in geometric function theory \cite{va2,vu2}. For some geometric properties of this metric, see \cite{k,l2}.

\end{nonsec}

It is well known that the following comparison relations hold for $x,y\in\Bn$,
\begin{equation}\label{jrho}
\frac{1}{2}\rho_\Bn(x,y)\leq j_\Bn(x,y)\leq \rho_\Bn(x,y),
\end{equation}

\begin{equation}\label{jk}
\frac{1}{2}\rho_\Bn(x,y)\leq k_\Bn(x,y)\leq \rho_\Bn(x,y).
\end{equation}
\medskip

\begin{nonsec} {\bf Lipschitz mappings.}
 Let $(X,d_X)$ and $(Y,d_Y)$ be metric spaces. Let $f: X\rightarrow Y$ be continuous and let $L\geq1$. We say that $f$ is $L$-lipschitz if
\begin{eqnarray*}
d_Y(f(x),f(y))\leq L d_X(x,y),\,\,{\rm for}\, x,\,y\in X,
\end{eqnarray*}
and $L$-bilipschitz if $f$ is
a homeomorphism and
\begin{eqnarray*}
d_X(x,y)/L\leq d_Y(f(x),f(y))\leq L d_X(x,y),\,\,{\rm for} \,x,\,y\in X.
\end{eqnarray*}
A $1$-bilipschitz mapping is called an isometry.
\end{nonsec}

\medskip

\begin{nonsec}{\bf Linear dilatation.}
Let $f: G\rightarrow\Rn$ be a continuous discrete function (i.e. the set $f^{-1}(y), y \in fG,$ consists of isolated points), where $G$ is a domain of $\Rn$. The linear dilatation of $f$
at a point $x\in G$ is given by
$$H(x,f)=\lim\sup_{r\rightarrow 0^+}\frac{L(x,f,r)}{l(x,f,r)},$$
where
$$L(x,f,r)=\max_z\{|f(z)-f(x)|: |z-x|=r\}\,,$$
$$l(x,f,r)=\min_z\{|f(z)-f(x)|: |z-x|=r\}\,,$$
for $r\in(0,d(x))$.
\end{nonsec}

\medskip

\begin{nonsec} {\bf Quasiregular mappings.}
Let $G_1$ and $G_2$ be domains in $\Rn$. A non-constant mapping $f: G_1\rightarrow G_2$ is said to be quasiregular if it satisfies the following conditions:

(1) $f$ is sense-preserving continuous, discrete, and open;

(2) $H(x,f)$ is locally bounded in $G_1$;

(3) There exists $a<\infty$ such that $H(x,f)\leq a$ for a.e. $x\in G_1\setminus B_f$, where $B_f$ is the branch set of $f$.

A quasiregular homeomorphism is called quasiconformal. Hence, a homeomorphism $f$ is quasiconformal if and only if $H(x,f)$ is bounded, see \cite[Theorem 4.13]{mrv}.
\end{nonsec}


\begin{nonsec}{\bf Automorphisms of $\Bn$.} We denote $a^*=\frac{a}{|a|^2}$ for $a\in\Rn\setminus\{0\}$, and $0^*=\infty$, $\infty^*=0$. For a fixed $a\in\Bn\setminus\{0\}$, let
$$
\sigma_a(z)=a^*+r^2(x-a^*)^*,\,\,r^2=|a|^{-2}-1
$$
be the inversion in the sphere $S^{n-1}(a^*,r)$ orthogonal to $S^{n-1}$. Then $\sigma_a(a)=0$,  $\sigma_a(a^*)=\infty$.

Let $p_a$ denote the reflection in the $(n-1)$-dimensional hyperplane $P(a,0)$ and define a sense-preserving M\"obius transformation by
\be\label{ta}
T_a=p_a\circ \sigma_a.
\ee
Then, $T_a\Bn=\Bn$, $T_a(a)=0$, and $T_a(e_a)=e_a$, $T_a(-e_a)=-e_a$. For $a=0$ we set $T_0=id$, where $id$ stands for the identity map. It is easy to see that $(T_a)^{-1}=T_{-a}$. Let $\mathcal{GM}(\Bn)$ be the group of M\"obius transformations which map $\Bn$ onto itself. It is well-known that for a given
$g\in\mathcal{GM}(\Bn)$
there is an orthogonal map $k$ such that $g=k\circ T_a$, where $a=g^{-1}(0)$  \cite[Theorem 3.5.1]{be1}. For $x\,,y\in\Bn$, $|T_x(y)|={\rm th}\frac12\rho_{\Bn}(x,y)$ \cite[(2.25)]{vu2}.
\end{nonsec}

The next lemma, so-called {\em monotone form of l'H${\rm \hat{o}}$pital's rule}, has found recently numerous applications in proving inequalities. See the extensive bibliography of \cite{avz}.

\begin{lem}\label{lhr}{\rm \cite[Theorem 1.25]{avv1}}
Let $-\infty<a<b<\infty$, and let $f,\,g: [a,b]\rightarrow \mathbb{R}$ be continuous on $[a,b]$, differentiable on $(a,b)$. Let $g'(x)\neq 0$ on $(a,b)$.Then, if $f'(x)/g'(x)$ is increasing(decreasing) on $(a,b)$, so are
\begin{eqnarray*}
\frac{f(x)-f(a)}{g(x)-g(a)}\,\,\,\,\,\,\,and\,\,\,\,\,\,\,\,\frac{f(x)-f(b)}{g(x)-g(b)}.
\end{eqnarray*}
If $f'(x)/g'(x)$ is strictly monotone, then the monotonicity in the conclusion is also strict.
\end{lem}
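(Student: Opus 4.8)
The plan is to reduce everything to Cauchy's Mean Value Theorem together with a direct computation of the derivative of each quotient. I would treat the representative case in which $f'/g'$ is increasing and show that $\frac{f(x)-f(a)}{g(x)-g(a)}$ is increasing; the decreasing case and the second quotient are handled by the same argument.

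First I would record a preliminary observation that makes the quotients meaningful. Since $g'$ is nonvanishing on $(a,b)$ and derivatives enjoy the intermediate value property (Darboux's theorem), $g'$ keeps a constant sign throughout $(a,b)$. Consequently $g$ is strictly monotone on $[a,b]$, so $g(x)-g(a)\neq 0$ for $x\in(a,b]$ and $g(x)-g(b)\neq 0$ for $x\in[a,b)$, and moreover the product $g'(x)\big(g(x)-g(a)\big)$ is strictly positive on $(a,b)$: both factors are positive when $g'>0$ and both negative when $g'<0$.

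Next I set $F(x)=\frac{f(x)-f(a)}{g(x)-g(a)}$ on $(a,b]$. Differentiating and factoring, its derivative is
$$F'(x)=\frac{g'(x)\big(g(x)-g(a)\big)}{\big(g(x)-g(a)\big)^2}\left(\frac{f'(x)}{g'(x)}-\frac{f(x)-f(a)}{g(x)-g(a)}\right).$$
By Cauchy's Mean Value Theorem there is a point $\xi\in(a,x)$ with $\frac{f(x)-f(a)}{g(x)-g(a)}=\frac{f'(\xi)}{g'(\xi)}$, so the bracketed factor equals $\frac{f'(x)}{g'(x)}-\frac{f'(\xi)}{g'(\xi)}$. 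Since $\xi<x$ and $f'/g'$ is increasing, this difference is $\ge 0$; combined with the positivity of the prefactor established above, $F'(x)\ge 0$, so $F$ is increasing on $(a,b)$, and monotonicity extends to the endpoint $b$ by continuity. If $f'/g'$ is strictly increasing, then $\xi<x$ forces a strict inequality, giving $F'(x)>0$ and hence strict monotonicity of $F$.

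Finally, for $H(x)=\frac{f(x)-f(b)}{g(x)-g(b)}$ the identical computation applies, except that Cauchy's theorem now supplies an intermediate point $\eta\in(x,b)$ with $\eta>x$, while the prefactor $g'(x)\big(g(x)-g(b)\big)$ is now strictly negative; the two sign reversals cancel and again yield $H'(x)\ge 0$. The only genuine subtlety is the bookkeeping of signs: one must verify in both the $g'>0$ and $g'<0$ cases that the geometric factor $g'(x)\big(g(x)-g(\cdot)\big)$ has the claimed sign, and that the intermediate point furnished by Cauchy's theorem lies on the correct side of $x$ so that the monotonicity of $f'/g'$ can be invoked. The Darboux constant-sign argument is precisely what renders these sign determinations unambiguous, and it is the one place where the hypothesis $g'\neq 0$ is used in full force.
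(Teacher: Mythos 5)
Your proof is correct. Note that the paper does not prove this lemma at all---it is quoted verbatim from \cite[Theorem 1.25]{avv1}---so there is no in-paper argument to compare against; your route (Darboux's theorem to fix the sign of $g'$, the factorization $F'(x)=\frac{g'(x)\left(g(x)-g(a)\right)}{\left(g(x)-g(a)\right)^2}\left(\frac{f'(x)}{g'(x)}-\frac{f(x)-f(a)}{g(x)-g(a)}\right)$, and Cauchy's Mean Value Theorem to place the second term of the bracket at an intermediate point on the correct side of $x$) is the standard proof and is essentially the one given in the cited reference, including the careful sign bookkeeping for the second quotient.
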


\bigskip

\section{The Schwarz-type Lemma}

In this section, we will prove the result related to the Schwarz-type lemma of the visual angle metric under $K$-quasiregular maps of the unit disk.

We first study the properties of some special functions involved in the proof of Theorem \ref{vs}. For $r\in(0,1)$ and $K>0$, we define the distortion function
$$\varphi_K(r)=\mu^{-1}(\mu(r)/K),$$
where $\mu(r)$ is the modulus of the planar Gr\"otzsch ring, see \cite[Exercise 5.61]{vu2}. It is clear that  $\varphi_1(r)=r$.
From now on we let $r'=\sqrt{1-r^2}$. Let $\K$ and $\E$ be the complete elliptic integral of the first and second kind, respectively.
The following derivative formulas of $\varphi_K(r)$ hold for $r\in (0,1)$, $K\in(0,\infty)$ \cite[(10.6)]{avv1}:
$$\frac{\partial \varphi_K(r)}{\partial r}=\frac 1K\frac{ss'^2\mathcal{K}(s)^2}{rr'^2\mathcal{K}(r)^2},$$
$$\frac{\partial \varphi_K(r)}{\partial K}=\frac{4}{\pi^2 K^2} ss'^2\mathcal{K}(s)^2\mu(r),$$
where $s=\varphi_K(r)$.
By \cite[Corollary]{wzc}, the function $\varphi_K(r)$ is concave in $r$ if $K>1$ and convex  if $0<K<1$.

\begin{lem}\label{vs1}
For $r\in (0,1)$, $K>1$, let $s=\varphi_K(r)$. Then the function\\
(1) $f_1(r)\equiv r^{-1/K}s$ is strictly decreasing from $(0,1)$ onto $(1, 4^{1-1/K})$;\\
(2) $f_2(r)\equiv\frac{s'\mathcal{K}(s)^2}{r'\mathcal{K}(r)^2}$ is strictly decreasing from $(0,1)$ onto $(0,1)$;\\
(3) $f_3(r)\equiv\sqrt{r'}\K(r)$ is decreasing from $[0,1)$ onto $(0,\pi/2]$;\\
(4) $f_4(r)\equiv\frac sr$ is strictly decreasing from $(0,1)$ onto $(1,\infty)$;\\
(5) $f_5(r)\equiv\frac{r}{\arctan(r/r')}$ is strictly decreasing from $(0,1)$ onto $(2/\pi,1)$;\\
(6) $f_6(r)\equiv 2\E(r)-r'^2\K(r)$ is strictly increasing from $(0,1)$ onto $(\pi/2,2)$.
\end{lem}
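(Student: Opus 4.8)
The plan is to reduce each of the six claims to the sign of one derivative, using throughout the differentiation formulas $\mathcal{K}'(r)=\frac{\mathcal{E}(r)-r'^2\mathcal{K}(r)}{rr'^2}$ and $\mathcal{E}'(r)=\frac{\mathcal{E}(r)-\mathcal{K}(r)}{r}$, the two displayed derivative formulas for $\varphi_K$, the monotone l'H\^opital rule (Lemma \ref{lhr}), and the elementary inequalities $r'^2\mathcal{K}(r)<\mathcal{E}(r)<\mathcal{K}(r)$ for $r\in(0,1)$ (both read off from the integral representations). Since $K>1$ forces $s=\varphi_K(r)>r$, monotonicity of an auxiliary function $\psi$ will repeatedly be turned into a comparison of $\psi(s)$ with $\psi(r)$.

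I would dispose of the three parts not involving $\varphi_K$ first. For (5) I would note the identity $\arctan(r/r')=\arcsin r$, so $f_5(r)=r/\arcsin r$; Lemma \ref{lhr} applied to the pair $(r,\arcsin r)$, both vanishing at $0$, reduces monotonicity to that of the derivative quotient $r'=\sqrt{1-r^2}$, which is strictly decreasing, and the endpoint values $1$ and $2/\pi$ are immediate. For (6) a direct computation collapses to $f_6'(r)=\frac{\mathcal{E}(r)-r'^2\mathcal{K}(r)}{r}>0$, with limits $\pi/2$ and $2$ coming from $\mathcal{K}(0)=\mathcal{E}(0)=\pi/2$, $\mathcal{E}(1)=1$ and $r'^2\mathcal{K}(r)\to0$. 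For (3) differentiation gives $f_3'(r)=\frac{2\mathcal{E}(r)-(1+r'^2)\mathcal{K}(r)}{2rr'^{3/2}}$, so everything rests on $2\mathcal{E}(r)<(1+r'^2)\mathcal{K}(r)$; writing the difference as $-r^2\int_0^{\pi/2}\cos 2\theta\,(1-r^2\sin^2\theta)^{-1/2}\,d\theta$ shows it is negative, since the larger weight falls where $\cos2\theta<0$. The range $(0,\pi/2]$ then follows from $f_3(0)=\pi/2$ and $\sqrt{r'}\mathcal{K}(r)\to0$.

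For the two parts that feed Theorem \ref{vs} I would use logarithmic differentiation together with the first $\varphi_K$-formula. A short calculation yields $(\log f_1)'=\frac{1}{Kr}\left(\tfrac{s'^2\mathcal{K}(s)^2}{r'^2\mathcal{K}(r)^2}-1\right)$ and $(\log f_4)'=\frac{1}{r}\left(\tfrac1K\tfrac{s'^2\mathcal{K}(s)^2}{r'^2\mathcal{K}(r)^2}-1\right)$, both of which are negative once $\tfrac{s'^2\mathcal{K}(s)^2}{r'^2\mathcal{K}(r)^2}<1$. This inequality I would get from the auxiliary fact that $h(r)=r'\mathcal{K}(r)$ is strictly decreasing, since $h'(r)=\frac{\mathcal{E}(r)-\mathcal{K}(r)}{rr'}<0$; then $s>r$ gives $h(s)<h(r)$ and hence $\tfrac{s'^2\mathcal{K}(s)^2}{r'^2\mathcal{K}(r)^2}=(h(s)/h(r))^2<1$. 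The ranges follow from the classical asymptotics $\varphi_K(r)\sim 4^{1-1/K}r^{1/K}$ as $r\to0$ and $\varphi_K(r)\to1$ as $r\to1$ (see \cite{avv1}): $f_1$ decreases onto $(1,4^{1-1/K})$ and $f_4$ onto $(1,\infty)$.

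The genuine obstacle is the strict monotonicity in (2). Because $f_2(r)=(f_3(s)/f_3(r))^2$, part (3) and $s>r$ already give $f_2<1$, and the limits $1$ at $0$ and $0$ at $1$ are routine; but the decrease is not formal. After substituting the $\varphi_K$-derivative, logarithmic differentiation gives $(\log f_2)'=\frac{1}{rr'^2\mathcal{K}(r)^2}\left(\tfrac{A(s)\mathcal{K}(s)}{K}-A(r)\mathcal{K}(r)\right)$ with $A(r)=2\mathcal{E}(r)-(2-r^2)\mathcal{K}(r)<0$ (this negativity is exactly the inequality proved in (3)), so the desired sign is equivalent to $B(\varphi_K(r))\mathcal{K}(\varphi_K(r))>K\,B(r)\mathcal{K}(r)$, where $B=-A>0$. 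To prove this I would pass to the modulus variable $u=\mu(r)$, in which $\mu(s)=\mu(r)/K$ and $\mu'(r)=-\frac{\pi^2}{4rr'^2\mathcal{K}(r)^2}$; the inequality then becomes the statement that $\Xi(r)=\frac{\pi}{2}\mathcal{K}(r')B(r)$ is increasing in $r$. Computing $\Xi'$ with $B'(r)=\frac{r(\mathcal{E}(r)-r'^2\mathcal{K}(r))}{r'^2}$ reduces the entire question to the single comparison $\mathcal{E}(r')\left(\mathcal{E}(r)-r'^2\mathcal{K}(r)\right)>\left(\mathcal{K}(r)-\mathcal{E}(r)\right)\left(\mathcal{E}(r')-r^2\mathcal{K}(r')\right)$, which I expect to settle using the Legendre relation $\mathcal{E}(r)\mathcal{K}(r')+\mathcal{E}(r')\mathcal{K}(r)-\mathcal{K}(r)\mathcal{K}(r')=\frac{\pi}{2}$ and the monotonicity results of \cite{avv1}. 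This cross-comparison of elliptic integrals at $r$ and at $r'$ is where the real work lies; the other five parts are essentially a disciplined tracking of signs and endpoints.
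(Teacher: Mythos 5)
Your parts (1), (3), (4), (5) and (6) are correct, and your computations check out: $f_6'(r)=(\E(r)-r'^2\K(r))/r$, the sign of $f_3'$ via the weighted integral of $\cos2\theta$, and the key auxiliary fact that $h(r)=r'\K(r)$ is strictly decreasing (so $h(s)<h(r)$ for $s=\varphi_K(r)>r$), which settles the signs of $(\log f_1)'$ and $(\log f_4)'$. Be aware that the paper does not prove (1), (2), (3), (6) at all --- it cites \cite[Lemmas 10.9(1), 10.7(1), 3.21(7), Exercise 3.43(13)]{avv1} --- and derives (4) from (1) by writing $f_4(r)=f_1(r)\cdot r^{1/K-1}$, a product of two positive decreasing functions, which is slightly slicker than your logarithmic differentiation; only (5) is proved in the paper essentially as you do. So for five of the six items your blind reconstruction is a valid, more self-contained alternative.

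The genuine gap is in (2). Your reduction is in fact correct and rather elegant: passing to the modulus variable turns the required inequality $B(s)\K(s)>K\,B(r)\K(r)$ (with $B(r)=(2-r^2)\K(r)-2\E(r)$) into the monotonicity of $\Xi(r)=\frac{\pi}{2}\K(r')B(r)$, and the Legendre identity gives $\E(r')\bigl(\E(r)-r'^2\K(r)\bigr)+\bigl(\K(r)-\E(r)\bigr)\bigl(\E(r')-r^2\K(r')\bigr)=\frac{\pi r^2}{2}$, so your ``single comparison'' is precisely equivalent to $\Xi'(r)>0$, i.e.\ to $\bigl(\K(r)-\E(r)\bigr)\bigl(\E(r')-r^2\K(r')\bigr)<\frac{\pi r^2}{4}$. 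But this last inequality is \emph{asymptotically an equality} as $r\to0^+$: both sides are $\sim\pi r^2/4$, since $(\K(r)-\E(r))/r^2\to\pi/4$ from below is false --- that quotient is \emph{increasing} from $\pi/4$ --- while $\E(r')-r^2\K(r')\to1$ from below. Consequently no crude combination of the standard monotonicity facts closes it; one must prove a new second-order statement, e.g.\ that $r\mapsto\bigl(\K(r)-\E(r)\bigr)\bigl(\E(r')-r^2\K(r')\bigr)/r^2$ is decreasing, and any attempt to deduce that from $\Xi$ or from Legendre again is circular. As written, ``which I expect to settle'' leaves the decrease of $f_2$ unproved; you should either supply this missing lemma with a full derivative computation or fall back on the citation \cite[Lemma 10.7(1)]{avv1} as the paper does.
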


\begin{proof}
(1) This is a result from \cite[Lemma 10.9(1)]{avv1}.

(2) This is a result from \cite[Lemma 10.7(1)]{avv1}.

(3) This is a result from \cite[Lemma 3.21(7)]{avv1}.

(4) Since
$$f_4(x)=\frac s{r^{1/K}}\cdot\frac 1{r^{1-1/K}},$$
$f_4$ is strictly decreasing by (1). The limiting values are clear.

(5) Let $f_{51}(r)=r$ and $f_{52}(r)=\arctan(r/r')$. Then $f_{51}(0^+)=f_{52}(0^+)=0$.
By differentiation, we have
$$\frac{f'_{51}(r)}{f'_{52}(r)}=r',$$
which is strictly decreasing. Hence, by Lemma \ref{lhr} $f_5$ is strictly decreasing. The limiting value $f_5(0^+)=1$ follows by l'H\^opital Rule and $f_5(1^-)=2/\pi$ is clear.

(6) This is a result from \cite[Exercise 3.43(13)]{avv1}.

\end{proof}

\begin{lem}\label{vs2}
For $r\in (0,1)$, $K>1$, let $s=\varphi_K(r)$. Then the function\\
(1) $f_1(r)\equiv\frac{\arctan(s/s')}{\arctan(r/r')}$ is strictly decreasing from $(0,1)$ onto $(1,\infty)$;\\
(2) $f_2(r)\equiv\frac{\arctan(s/s')}{(\arctan(r/r'))^{1/K}}$ is strictly decreasing from $(0,1)$ onto $((\pi/2)^{1-1/K}, 4^{1-1/K})$.
\end{lem}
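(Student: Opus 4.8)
The plan is to reduce both parts to the monotonicity facts already collected in Lemma \ref{vs1}, using throughout the elementary identities
$$\arctan\frac{r}{r'}=\arcsin r,\qquad \frac{d}{dr}\arctan\frac{r}{r'}=\frac{1}{r'},$$
together with the derivative formula for $\varphi_K$ recorded before Lemma \ref{vs1}, which for $s=\varphi_K(r)$ gives
$$\frac{d}{dr}\arctan\frac{s}{s'}=\frac{1}{s'}\frac{\partial\varphi_K(r)}{\partial r}=\frac{1}{K}\frac{ss'\mathcal{K}(s)^2}{rr'^2\mathcal{K}(r)^2}.$$

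For part (1) I would apply the monotone l'H\^opital rule, Lemma \ref{lhr}. Put $F(r)=\arctan(s/s')$ and $G(r)=\arctan(r/r')$; both vanish as $r\to0^+$ because $\varphi_K(0^+)=0$, so $f_1=F/G=(F-F(0))/(G-G(0))$. Dividing the two derivatives above yields
$$\frac{F'(r)}{G'(r)}=\frac{1}{K}\cdot\frac{s}{r}\cdot\frac{s'\mathcal{K}(s)^2}{r'\mathcal{K}(r)^2},$$
which is $\tfrac1K$ times the product of the functions $f_4$ and $f_2$ of Lemma \ref{vs1}. Both are positive and strictly decreasing, so their product is, and Lemma \ref{lhr} gives that $f_1$ is strictly decreasing. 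The endpoints follow from $s\sim 4^{1-1/K}r^{1/K}$ as $r\to0^+$ (Lemma \ref{vs1}(1)), which forces $f_1(0^+)=\infty$, and from $s\to1$, $\arctan(s/s')\to\pi/2$ as $r\to1^-$, giving $f_1(1^-)=1$.

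For part (2) the analogous product idea fails: since $\arctan(r/r')=\arcsin r$ is increasing and $K>1$, writing $f_2=f_1\cdot(\arctan(r/r'))^{1-1/K}$ expresses $f_2$ as a decreasing function times an increasing one, which is inconclusive. Instead I would show directly that the logarithmic derivative is negative. The two displayed derivatives give
$$\frac{f_2'(r)}{f_2(r)}=\frac{1}{Kr'\arctan(r/r')}\left[\frac{\arctan(r/r')}{\arctan(s/s')}\cdot\frac{ss'\mathcal{K}(s)^2}{rr'\mathcal{K}(r)^2}-1\right],$$
so it suffices to prove that the product
$$P(r):=\frac{\arctan(r/r')}{\arctan(s/s')}\cdot\frac{s}{r}\cdot\frac{s'\mathcal{K}(s)^2}{r'\mathcal{K}(r)^2}$$
satisfies $P(r)<1$ on $(0,1)$. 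The crucial observation, and the step I expect to be the main obstacle, is that $P$ factors through the function $f_5$ of Lemma \ref{vs1}: since $f_5(t)=t/\arctan(t/t')$,
$$P(r)=\frac{f_5(s)}{f_5(r)}\cdot\frac{s'\mathcal{K}(s)^2}{r'\mathcal{K}(r)^2}.$$
Because $K>1$ we have $s=\varphi_K(r)>r$ by Lemma \ref{vs1}(4), and $f_5$ is strictly decreasing by Lemma \ref{vs1}(5), so $f_5(s)<f_5(r)$ and the first factor is $<1$; the second factor is exactly $f_2$ of Lemma \ref{vs1}(2), which lies in $(0,1)$. Hence $P(r)<1$, the logarithmic derivative is negative, and $f_2$ is strictly decreasing. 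The endpoint values $f_2(0^+)=4^{1-1/K}$ and $f_2(1^-)=(\pi/2)^{1-1/K}$ follow from the same asymptotics of $s$ as before. The whole difficulty of part (2) is thus concentrated in recognizing the factorization $P=\tfrac{f_5(s)}{f_5(r)}f_2$ and combining it with the inequality $s>r$, which converts an opaque inequality between special functions into the already-established monotonicity of $f_5$.
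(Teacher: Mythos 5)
Your proposal is correct and follows essentially the same route as the paper: part (1) is the identical monotone l'H\^opital argument with the same derivative ratio $\frac1K\cdot\frac sr\cdot\frac{s'\K(s)^2}{r'\K(r)^2}$ handled by Lemma \ref{vs1}(2),(4), and part (2) is the same logarithmic differentiation reducing to the inequality $\frac{ss'\K(s)^2}{\arctan(s/s')}<\frac{rr'\K(r)^2}{\arctan(r/r')}$. The only (harmless) difference is that the paper deduces this by showing $t\mapsto tt'\K(t)^2/\arctan(t/t')$ is decreasing via Lemma \ref{vs1}(3),(5) and then using $s>r$, whereas you split the ratio into $\frac{f_5(s)}{f_5(r)}$ and $\frac{s'\K(s)^2}{r'\K(r)^2}$ and bound each factor by $1$ separately.
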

\begin{proof}
(1) Let $f_{11}(r)=\arctan(s/s')$ and $f_{12}(r)=\arctan(r/r')$. Then $f_{11}(0^+)=f_{12}(0^+)=0$.
By differentiation, we have
$$\frac{f'_{11}(r)}{f'_{12}(r)}=\frac1K\cdot\frac sr\cdot\frac{s'\mathcal{K}(s)^2}{r'\mathcal{K}(r)^2},$$
which is strictly decreasing by Lemma \ref{vs1}(2)(4). Therefore, by Lemma \ref{lhr} $f_1$ is strictly decreasing. The limiting value $f_1(0^+)=\infty$ follows by l'H\^opital Rule and $f_1(1^-)=1$ is clear.

(2) By logarithmic differentiation in $r$,
$$\frac{f'_2(r)}{f_2(r)}=\frac 1K \cdot\frac{1}{rr'^2\K(r)^2}\left(\frac{ss'\K(s)^2}{\arctan(s/s')}-\frac{rr'\K(r)^2}{\arctan(r/r')}\right).$$
By Lemma \ref{vs1}(3)(5), the function $\frac{rr'\K(r)^2}{\arctan(r/r')}$ is strictly decreasing on $(0,1)$. Since $s>r$, $f'_2(r)<0$ and hence $f_2$ is strictly decreasing. The limiting value $f_2(1^-)=(\pi/2)^{1-1/K}$ is clear. By Lemma \ref{vs1}(1)
$$f_2(0^+)=\lim_{r\rightarrow 0^+}\frac{\arctan(s/s')}{(\arctan(r/r'))^{1/K}}=\lim_{r\rightarrow 0^+}\frac{s/s'}{\left(r/r'\right)^{1/K}}=\lim_{r\rightarrow 0^+}\frac{s}{r^{1/K}}\cdot\frac{r'^{1/K}}{s'}=4^{1-1/K}.$$
\end{proof}

\begin{lem}\label{vs3}
For $K\ge 1$, let $r_0=\frac{\tan 1}{\sqrt{1+\tan^21}}\approx0.841471$ and $s_0=\varphi_K(r_0)$. Then
the function
$$f(K)\equiv4^{1-1/K}\cdot\frac{\arctan(r_0/r'_0)}{\arctan(s_0/s'_0)}$$
is strictly increasing. In particular,
\be\label{ck}
\max\left\{4^{1-1/K},\, \frac{\arctan(s_0/s_0')}{\arctan(r_0/r'_0)}\right\}=4^{1-1/K}.
\ee
\end{lem}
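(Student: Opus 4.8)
The plan is to prove the monotonicity of $f(K)$ by logarithmic differentiation and then read off \eqref{ck} as an immediate consequence. The key simplification is that $r_0=\sin 1$: indeed $r_0'=\cos 1$ and $r_0/r_0'=\tan 1$, so that $\arctan(r_0/r_0')=1$. Hence
$$f(K)=\frac{4^{1-1/K}}{\arctan(s_0/s_0')},\qquad s_0=\varphi_K(r_0),$$
and, writing $g(K)=\log f(K)=(1-1/K)\log 4-\log\arctan(s_0/s_0')$, it suffices to show $g'(K)>0$ for $K\ge 1$.

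First I would differentiate. Since $s_0=\varphi_K(r_0)$, the derivative formula for $\partial\varphi_K/\partial K$ quoted before Lemma \ref{vs1} gives $\partial s_0/\partial K=\frac{4}{\pi^2K^2}s_0 s_0'^2\K(s_0)^2\mu(r_0)$. Using $\frac{d}{ds}(s/s')=1/s'^3$ together with $1/(1+(s_0/s_0')^2)=s_0'^2$, the chain rule collapses to
$$g'(K)=\frac{1}{K^2}\Big(\log 4-\Phi(s_0)\Big),\qquad \Phi(s)=\frac{4\mu(r_0)}{\pi^2}\cdot\frac{s s'\K(s)^2}{\arctan(s/s')}.$$
Thus everything reduces to the inequality $\Phi(s_0)<\log 4$.

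The crux is that $\Phi$ is strictly decreasing. I would observe the factorization
$$\frac{s s'\K(s)^2}{\arctan(s/s')}=\frac{s}{\arctan(s/s')}\cdot\big(\sqrt{s'}\,\K(s)\big)^2=f_5(s)\,f_3(s)^2,$$
where $f_3,f_5$ are the functions of Lemma \ref{vs1}(3),(5); both are positive and strictly decreasing on $(0,1)$, so their product, and hence $\Phi$, is strictly decreasing. Since $s_0=\varphi_K(r_0)\ge r_0$ for $K\ge 1$ (with equality only at $K=1$, by Lemma \ref{vs1}(4)), we get $\Phi(s_0)\le\Phi(r_0)$. It therefore remains to check the single numerical inequality $\Phi(r_0)<\log 4$. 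Because $\arctan(r_0/r_0')=1$, this reads $\frac{4\mu(r_0)}{\pi^2}r_0 r_0'\K(r_0)^2<\log 4$, equivalently $\frac{2}{\pi}r_0 r_0'\K(r_0)\K(r_0')<\log 4$ after inserting the standard relation $\mu(r_0)=\frac{\pi}{2}\K(r_0')/\K(r_0)$; with $r_0=\sin 1$, $r_0'=\cos 1$ the left side is $\approx 1.02$, comfortably below $\log 4\approx 1.386$. I expect this numerical step to be the only real obstacle, and it is handled by routine upper bounds for $\K(\sin 1)$ and $\K(\cos 1)$, the margin being ample. Consequently $g'(K)>0$ and $f$ is strictly increasing.

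Finally, \eqref{ck} follows at once. At $K=1$ we have $\varphi_1(r_0)=r_0$, so $f(1)=4^{0}=1$, and monotonicity gives $f(K)\ge 1$ for all $K\ge 1$. Rearranging the inequality $4^{1-1/K}\,\arctan(r_0/r_0')\ge\arctan(s_0/s_0')$ yields $\frac{\arctan(s_0/s_0')}{\arctan(r_0/r_0')}\le 4^{1-1/K}$, which is exactly the assertion that the maximum in \eqref{ck} equals $4^{1-1/K}$.
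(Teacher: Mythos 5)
Your proposal is correct and follows essentially the same route as the paper: logarithmic differentiation in $K$, reduction to the inequality $\frac{4\mu(r_0)}{\pi^2}\cdot\frac{s_0 s_0'\K(s_0)^2}{\arctan(s_0/s_0')}<\log 4$, monotonicity in $s$ via Lemma \ref{vs1}(3)(5), and a numerical check at $K=1$ (the paper's value $g(1)\approx 0.744915$ corresponds to your $\Phi(r_0)\approx 1.03$, so your ``$\approx 1.02$'' is a slight underestimate but the conclusion is unaffected). The observation that $\arctan(r_0/r_0')=1$ is a harmless cosmetic simplification.
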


\begin{proof}
By logarithmic differentiation in $K$,
$$\frac{f'(K)}{f(K)}=\frac {\log 4}{K^2}\left(1-g(K)\right),$$
where
$$g(K)=\frac{4\mu(r_0)}{\pi^2\log 4}\cdot\frac{s_0}{\arctan(s_0/s'_0)}\cdot\left(\sqrt{s'_0}\K(s_0)\right)^2.$$
Since $s_0$ is increasing in $K$, $g$ is decreasing in $K$ by Lemma \ref{vs1}(3)(5). Then $g(K)<g(1)=\frac{4\mu(r_0)}{\pi^2\log 4}\cdot\frac{r_0 r'_0\K(r_0)^2}{\arctan(r_0/r'_0)}\approx0.744915<1$. Hence $f'(K)>0$, which implies that $f$ is strictly increasing. Therefore, (\ref{ck}) follows by the monotonicity of $f$ and $f(1)=1$.
\end{proof}

\begin{lem}\label{mthm1}{\rm \cite[Theorem 1.1]{klvw}}
For $G\in\{\Bn,\Hn\}$ and $x\,,y\in G$, let $\rho_{G}^*(x,y)=\arctan\left({\rm sh}\frac{\rho_G(x,y)}{2}\right)$. Then
$$\rho_{G}^*(x,y)\leq v_G(x,y)\leq 2\rho_{G}^*(x,y).$$
The left-hand side of the inequality is sharp and the constant 2 in the right-hand side of the inequality is the best possible.
\end{lem}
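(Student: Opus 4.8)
The plan is to prove the two inequalities, together with their sharpness, by reducing first to the plane and then to an analysis of the extremal boundary point. Since $\rho_G$ is invariant under the M\"obius transformations carrying $\Bn$ to $\Hn$, so is $\rho_G^*$; by contrast $v_G$ is only similarity invariant and fails to be M\"obius invariant, so I would not transfer one model to the other but treat $G=\Bn$ and $G=\Hn$ in parallel. In either case the function $z\mapsto\ang(x,z,y)$ on $\bdr G$ is invariant under the reflection in the $2$-plane $\Pi$ determined by $x,y$ and the distinguished direction (the center $0$ for $\Bn$, the inner normal $e_n$ for $\Hn$); this reflection preserves $G$ and fixes $x$ and $y$, and a symmetrization argument (the extremal tangent sphere is centered in $\Pi$) shows that the supremum defining $v_G(x,y)$ is attained on $\Pi\cap\bdr G$. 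Thus it suffices to argue in $\B$ and $\HUP$, where the remaining similarities can be used to normalize the pair: rotations about $0$ in $\B$, and horizontal translations, dilations and reflections in $\HUP$, the latter reducing any pair to $x=(0,1)$, $y=(c,d)$ with $c\ge 0$, $d>0$.

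I would next isolate the two extremal positions, which also settle sharpness. On a symmetric radial geodesic I expect $v_G=\rho_G^*$. In $\B$ this is cleanest with one point at the origin: by \eqref{omega0x}, the identity $\arcsin t=\arctan\frac{t}{\sqrt{1-t^2}}$, and \eqref{sinh}, one gets $v_\B(0,x)=\arcsin|x|=\arctan\sinh\frac{\rho_\B(0,x)}{2}=\rho_\B^*(0,x)$, so the left-hand inequality is sharp; the corresponding ``statue-viewing'' maximum in $\HUP$ for $x=(0,1)$, $y=(0,d)$ is attained at $(\pm\sqrt d,0)$ and equals $\arctan\frac{d-1}{2\sqrt d}=\rho_\HUP^*(x,y)$. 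For the upper bound the extremal position is the symmetric equal-modulus (resp.\ equal-height) one. In $\HUP$, taking $d=1$ gives $v=2\arctan\frac c2=2\rho^*$ exactly, so the constant $2$ is attained there; in $\B$, \eqref{omega1x} for $|x|=|y|=r$ with $\theta=\tfrac12\ang(x,0,y)$ yields $v_\B(x,y)=2\arctan\frac{r\sin\theta}{1-r\cos\theta}$ while \eqref{sinh} gives $\rho_\B^*(x,y)=\arctan\frac{2r\sin\theta}{1-r^2}$, and the elementary inequality $1-r^2\le 2(1-r\cos\theta)$ (equivalently $(1-r)^2+2r(1-\cos\theta)\ge 0$) shows $v_\B\le 2\rho_\B^*$ with ratio tending to $2$ as $r\to 1$. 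Hence $2$ is best possible.

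To handle an arbitrary pair I would fix $\rho(x,y)$ and let the configuration vary. In the normalized $\HUP$ coordinates the level set $\{\rho(x,y)=\text{const}\}$ is an explicit circle in the $(c,d)$-plane on which the radial configurations ($c=0$) and the equal-height configuration ($d=1$) are distinguished points. The extremal boundary point $z=(t,0)$ is characterized by $\tfrac{d}{dt}\ang(x,z,y)=0$, i.e.\ by the Apollonius/tangency condition $|x-z|^2/|y-z|^2=h_1/h_2$ relating the heights $h_1,h_2$, and $v_G$ is the largest resulting critical value. I would then show that, as the configuration tilts along this level set from a radial position toward the equal-height position, the extremal angle increases monotonically from $\rho_G^*$ to $2\rho_G^*$. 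Substituting the tangency condition into the critical value and differentiating in the tilt parameter, the desired monotonicity reduces to a one-variable inequality for elementary expressions, to which the monotone form of l'H\^opital's rule, Lemma \ref{lhr}, applies; the same scheme, with the circles through $x,y$ internally tangent to $S^{n-1}$, applies to $\B$, and combined with the endpoint values above this yields $\rho_G^*\le v_G\le 2\rho_G^*$ in both models.

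The main obstacle is the control of $v_G$ itself. Unlike $\rho_G^*$ it has no closed M\"obius-invariant form, and in the unequal-height case the stationarity condition has two roots, namely the tangent points of the two circles through $x,y$ touching $\bdr G$, so one must first decide which of the two competing critical values is the supremum before any monotonicity can be invoked. Carrying out the differentiation of this selected critical value along the level set at fixed $\rho$, and casting the result as a ratio that is monotone by Lemma \ref{lhr}, is the delicate step; by comparison the planar reduction of the first paragraph and the closed-form identities of the second are routine.
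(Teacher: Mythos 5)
First, note that the paper does not prove this lemma at all: it is imported verbatim from \cite[Theorem 1.1]{klvw}, so there is no in-paper argument to compare against and your attempt has to stand on its own.

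On its own terms, your write-up has a genuine gap at its center. The reduction to the $2$-plane through $x,y$ and the distinguished direction is fine (the envelope $E_{xy}^\alpha$ is a body of revolution about the line $xy$, so its extremal point relative to $\bdr G$ lies in that plane), and your endpoint computations are correct and do settle sharpness: $v_{\Bn}(0,x)=\arcsin|x|=\rho_{\Bn}^*(0,x)$ gives equality on the left, and for $x=(0,1)$, $y=(c,1)$ in $\HUP$ the tangent circle touches at $(c/2,0)$ and gives $v=2\arctan(c/2)=2\rho_{\HUP}^*(x,y)$, so $2$ is attained. But these are measure-zero configurations; the lemma's content is the inequality for an \emph{arbitrary} pair, and for that you offer only the assertion that, along a level set of $\rho$, the extremal angle ``increases monotonically from $\rho_G^*$ to $2\rho_G^*$'' as the pair tilts from radial to equal-height position. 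That monotonicity is exactly equivalent to the theorem and is nowhere proved: you do not resolve which of the two tangent points of the circles through $x,y$ touching $\bdr G$ realizes the supremum (and this choice genuinely switches as the configuration varies), you do not write down the critical value as a function of the tilt parameter, and you do not exhibit the ratio of functions to which Lemma \ref{lhr} is supposed to apply. Saying that the delicate step ``reduces to a one-variable inequality for elementary expressions'' is a plan, not a proof; until that inequality is displayed and verified, neither $\rho_G^*\le v_G$ nor $v_G\le 2\rho_G^*$ is established off the special configurations. (A minor additional slip: in $\B$ with $|x|=|y|=r$ and $\theta$ fixed, the ratio $v/\rho^*$ does not tend to $2$ as $r\to1$ — one must also let $\theta\to0$ — but this is moot since the $\HUP$ computation already attains the constant $2$.) To complete the argument you would need to do what \cite{klvw} does: derive explicit closed-form expressions for $v_{\Hn}$ and $v_{\Bn}$ from the tangent-circle condition and compare them directly with $\arctan\bigl(\sh\frac{\rho}{2}\bigr)$.
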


\begin{lem}\label{cgqm1}{\rm \cite[Theorem 11.2]{vu2}}
Let $f:\B\rightarrow\R^2$ be a non-constant $K$-quasiregular mapping with $f\B\subset\B$.
Then for all $x,\,y\in\B$,
$${\rm th}\frac12\rho_{\B}\left(f(x),f(y)\right)\le \varphi_K\left({\rm th}\frac12\rho_{\B}(x,y)\right).$$
\end{lem}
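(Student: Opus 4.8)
The plan is to first use the conformal invariance of the problem to normalize $f$ so that it fixes the origin, and then to recognize the resulting inequality as the quasiregular Schwarz lemma, which I would prove through the conformal capacity of ring domains and the extremal property of the Gr\"otzsch ring.

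\emph{Step 1: M\"obius normalization.} Every $T\in\mathcal{GM}(\B)$ is conformal, hence preserves both quasiregularity and the dilatation $K$; it is also a hyperbolic isometry, and since $|T_x(y)|={\rm th}\frac12\rho_{\B}(x,y)$, the quantity ${\rm th}\frac12\rho_{\B}$ is a M\"obius invariant of pairs of points. Fixing $x,y\in\B$, I would set $g=T_{f(x)}\circ f\circ T_{-x}$. Then $g:\B\to\B$ is a non-constant $K$-quasiregular map with $g\B\subset\B$ and $g(0)=T_{f(x)}(f(x))=0$. Writing $w=T_x(y)$, one has $|w|={\rm th}\frac12\rho_{\B}(x,y)$ and $|g(w)|=|T_{f(x)}(f(y))|={\rm th}\frac12\rho_{\B}(f(x),f(y))$. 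Thus the assertion is equivalent to the normalized Schwarz inequality $|g(w)|\le\varphi_K(|w|)$ for every non-constant $K$-quasiregular $g:\B\to\B$ with $g(0)=0$.

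\emph{Step 2: Translation into capacity.} For a domain $G$ and $a,b\in G$ let $\tau_G(a,b)=\inf\{\operatorname{cap}(G,C)\}$, the infimum of the conformal capacity $\operatorname{cap}(G,C)=M(\Delta(C,\partial G;G))$ taken over all continua $C\subset G$ with $a,b\in C$. By Gr\"otzsch's extremal theorem the radial segment is the minimizing continuum, so $\tau_{\B}(0,z)=\operatorname{cap}(\B,[0,z])=2\pi/\mu(|z|)$, where $\mu$ is the Gr\"otzsch modulus appearing in $\varphi_K=\mu^{-1}(\mu(\cdot)/K)$. Since $\mu$ is a strictly decreasing homeomorphism of $(0,1)$, setting $r=|w|$, $s=|g(w)|$ gives
$$|g(w)|\le\varphi_K(|w|)\iff\mu(s)\ge\mu(r)/K\iff\frac{2\pi}{\mu(s)}\le K\,\frac{2\pi}{\mu(r)}\iff\tau_{\B}(0,g(w))\le K\,\tau_{\B}(0,w).$$
Because $g(0)=0$, the right-hand inequality reads $\tau_{\B}(g(0),g(w))\le K\,\tau_{\B}(0,w)$.

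\emph{Step 3: The quasiregular capacity inequality.} The inequality just displayed is an instance of the basic distortion estimate for $\tau$ under quasiregular maps: if $g$ is non-constant $K$-quasiregular then $\tau_{g\B}(g(a),g(b))\le K\,\tau_{\B}(a,b)$. I would prove this by choosing a continuum $C\ni a,b$ nearly realizing $\tau_{\B}(a,b)$, noting that $gC$ joins $g(a),g(b)$ in $g\B$, and bounding $\operatorname{cap}(g\B,gC)$ by $K\,\operatorname{cap}(\B,C)$ via path lifting and the $K_I$-inequality for the modulus of curve families (maximal lifts of the connecting family of $(g\B,gC)$ run from $C$ to $\partial\B$, so their modulus is at most $\operatorname{cap}(\B,C)$). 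Finally, since $g\B\subset\B$, the monotonicity of $\tau$ in the domain gives $\tau_{\B}(g(a),g(b))\le\tau_{g\B}(g(a),g(b))$, and combining this with Step 2 completes the argument.

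\emph{Main obstacle.} The substance lies entirely in Step 3. The delicate point is the correct bookkeeping of multiplicity in the $K_I$-inequality: because $g$ need not be injective, one must invoke the path-lifting theory for open discrete maps to control $\operatorname{cap}(g\B,gC)$ without an extraneous multiplicity factor, and one must verify that maximal lifts of curves tending to $\partial(g\B)$ indeed tend to $\partial\B$. The other standard ingredient, Gr\"otzsch's identification of the radial segment as the capacity-minimizing continuum (hence the explicit formula $\tau_{\B}(0,z)=2\pi/\mu(|z|)$), together with the elementary monotonicity of $\mu$ and $\tau$, then makes the remaining steps routine.
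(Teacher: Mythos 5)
The paper does not prove this lemma at all; it quotes it from \cite[Theorem 11.2]{vu2}, and your argument is essentially a faithful reconstruction of the proof in that cited source: M\"obius normalization to $g(0)=0$ using $T_{f(x)}$, $T_{-x}$ and the invariance of ${\rm th}\frac12\rho_{\B}$, Gr\"otzsch's extremal property giving $\tau_{\B}(0,z)=2\pi/\mu(|z|)$, the $K_I$-capacity inequality for quasiregular maps via Poletsky/V\"ais\"al\"a path lifting, and domain monotonicity of the capacity metric to exploit $g\B\subset\B$. All the reductions and the translation between $\mu$, $\varphi_K$ and capacity are correct, so your proposal is sound and follows the same route as the standard proof.
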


 We are now in a position to prove Theorem \ref{vs}.
\begin{proof}[Proof of Theorem \ref{vs}]
By Lemma \ref{mthm1} and Lemma \ref{cgqm1}, we get
\begin{eqnarray*}
A\equiv\frac{v_{\B}(f(x),f(y))}{\max\left\{v_{\B}(x,y),\,v_{\B}(x,y)^{1/K}\right\}}&\le&
\frac{2\arctan({\rm sh}\frac{\rho_{\B}(f(x),f(y))}{2})}{\max\left\{\arctan({\rm sh}\frac{\rho_{\B}(x,y)}{2}),\,(\arctan({\rm sh}\frac{\rho_{\B}(x,y)}{2}))^{1/K}\right\}}\\
&\le& \frac{2\arctan(s/s')}{\max\left\{\arctan(r/r'),\,(\arctan(r/r'))^{1/K}\right\}},
\end{eqnarray*}
where $r={\rm th}\frac{\rho_{\B}(x,y)}{2}$ and $s=\varphi_K(r)$. By Lemma \ref{vs2} and Lemma \ref{vs3},
$$A\le 2\max\{f_1(r_0),\,f_2(0^+)\}=2\cdot4^{1-1/K},$$
where $f_1$, $f_2$ are as in Lemma \ref{vs2} and $r_0$ is as in Lemma \ref{vs3}.

This completes the proof of Theorem \ref{vs}.
\end{proof}

In \cite{bv}, an explicit form of the Schwarz lemma for quasiregular mappings was given.
\begin{thm}\cite[Theorem 1.10]{bv}\label{bvtrans}
If $f:\B\rightarrow \R^2$ is a non-constant $K$-quasiregular map with $f\B\subset\B$ and $\rho$ is the hyperbolic metric of $\B$, then
$$\rho_{\B}(f(x),f(y))\le c(K) \max\{\rho_{\B}(x,y),\,\rho_{\B}(x,y)^{1/K}\}$$
for all $x,\,y\in\B$, where $c(K)=2\arth(\varphi_K({\rm th}\frac{1}{2}))$ and, in particular, $C(1)=1$.
\end{thm}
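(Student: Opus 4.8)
The plan is to follow the same strategy as the proof of Theorem~\ref{vs}, but to drop the factor-$2$ loss coming from Lemma~\ref{mthm1} by using the \emph{exact} identity $\rho_\B(x,y)=2\arth({\rm th}\tfrac12\rho_\B(x,y))$, so that Lemma~\ref{cgqm1} can be applied with no slack. First I would put $r={\rm th}\tfrac12\rho_\B(x,y)$ and $s=\varphi_K(r)$; then Lemma~\ref{cgqm1} and the monotonicity of $\arth$ give
\begin{equation*}
\rho_\B(f(x),f(y))=2\arth\!\big({\rm th}\tfrac12\rho_\B(f(x),f(y))\big)\le 2\arth(\varphi_K(r))=:\Phi_K(\rho_\B(x,y)),
\end{equation*}
where $\Phi_K(u):=2\arth(\varphi_K({\rm th}(u/2)))$. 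Since $\Phi_K(1)=2\arth(\varphi_K({\rm th}\tfrac12))=c(K)$, it suffices to prove the one-variable inequality $\Phi_K(u)\le c(K)\max\{u,u^{1/K}\}$ for all $u>0$; the degenerate case is immediate, because $\varphi_1=\mathrm{id}$ gives $c(1)=2\arth({\rm th}\tfrac12)=1$, i.e.\ the Schwarz--Pick inequality.

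Next I would split according to the sign of $u-1$. For $u\ge1$ one has $\max\{u,u^{1/K}\}=u$ and $\Phi_K(u)/u=\arth(s)/\arth(r)$, while for $u\le1$ one has $\max\{u,u^{1/K}\}=u^{1/K}$ and $\Phi_K(u)/u^{1/K}=2^{1-1/K}\arth(s)/(\arth r)^{1/K}$. Thus the theorem reduces to hyperbolic analogues of Lemma~\ref{vs2}, namely that on $(0,1)$
\begin{equation*}
\tilde h_1(r)=\frac{\arth(s)}{\arth(r)}\ \text{ is decreasing}\quad\text{and}\quad \tilde h_2(r)=\frac{\arth(s)}{(\arth r)^{1/K}}\ \text{ is increasing.}
\end{equation*}
(Note the sign flip relative to Lemma~\ref{vs2}, caused by $\arth$ being unbounded where $\arctan$ is bounded; this is exactly why the extremal value now occurs at the interior threshold $u=1$ and produces the clean constant $c(K)$.) Evaluating at $r={\rm th}\tfrac12$ (i.e.\ $u=1$) then yields $\tilde h_1(r)\le c(K)$ for $u\ge1$ and $2^{1-1/K}\tilde h_2(r)\le c(K)$ for $u\le1$, which is the claim.

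For both monotonicities I would use the monotone l'H\^opital rule (Lemma~\ref{lhr}) with the derivative formula $\partial\varphi_K/\partial r=\tfrac1K\,ss'^2\K(s)^2/(rr'^2\K(r)^2)$. The statement for $\tilde h_2$ is the clean one: logarithmic differentiation shows $\tilde h_2'>0$ is equivalent to $\frac{s\K(s)^2}{\arth s}>\frac{r\K(r)^2}{\arth r}$, so since $s>r$ it suffices that $w(r)=r\K(r)^2/\arth(r)$ be increasing. Writing $w=N/D$ with $N=r\K(r)^2$ and $D=\arth r$ (both vanishing at $0$) and using the identity $\K(r)+2r\K'(r)=f_6(r)/r'^2$, the derivative ratio collapses to $N'/D'=\K(r)f_6(r)$, a product of two positive increasing functions by Lemma~\ref{vs1}(6); Lemma~\ref{lhr} then gives $w$ increasing.

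The hard part will be $\tilde h_1$ decreasing. By Lemma~\ref{lhr} this reduces to showing that the derivative ratio $\tfrac1K\,\frac{s\K(s)^2}{r\K(r)^2}$ is decreasing in $r$. Logarithmic differentiation expresses its sign through $\tfrac1K\K(s)f_6(s)-\K(r)f_6(r)$, and using $\mu(s)=\mu(r)/K$ together with $\mu=\tfrac{\pi}{2}\K(\cdot')/\K(\cdot)$ this is equivalent to the assertion that $f_6(x)\K(x')=(2\E(x)-x'^2\K(x))\K(x')$ is decreasing on $(0,1)$. This is the main obstacle: $f_6$ is increasing while $\K(x')$ is decreasing, so no product argument is available, and a direct computation (using $f_6'(x)=(\E(x)-x'^2\K(x))/x$ and the Legendre relation $\E(x)\K(x')+\E(x')\K(x)-\K(x)\K(x')=\pi/2$) reduces the claim to
\begin{equation*}
\tfrac{\pi}{2}x'^2>2\big(\K(x')-\E(x')\big)\big(\E(x)-x'^2\K(x)\big),
\end{equation*}
an inequality that becomes asymptotically sharp as $x\to1^-$ and hence requires a careful second-order elliptic-integral estimate near the endpoint (or an appeal to a known monotonicity lemma for $f_6(x)\K(x')$ from the special-function theory developed in \cite{avv1}).
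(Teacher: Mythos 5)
There is a genuine gap, and you have in fact flagged it yourself. Your reduction is correct as far as it goes: writing $r={\rm th}\frac12\rho_{\B}(x,y)$, $s=\varphi_K(r)$, the case $\rho_{\B}(x,y)\le 1$ needs $\arth(s)/(\arth r)^{1/K}$ increasing, and the case $\rho_{\B}(x,y)\ge 1$ needs $\arth(s)/\arth(r)$ decreasing, with both extremal values occurring at $r={\rm th}\frac12$ and combining to $c(K)$. Your proof of the first monotonicity (via $w(r)=r\K(r)^2/\arth r$ increasing, with derivative ratio $\K(r)(2\E(r)-r'^2\K(r))$ and Lemma \ref{vs1}(6)) is exactly the corrected proof of \cite[Lemma 4.8]{bv} that the paper supplies after stating the theorem, so that half is fine. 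But the second monotonicity is where your argument stops: you correctly reduce it, via $\mu(s)=\mu(r)/K$ and $\mu(x)=\frac{\pi}{2}\K(x')/\K(x)$, to the claim that $x\mapsto(2\E(x)-x'^2\K(x))\K(x')$ is decreasing on $(0,1)$, equivalently $\frac{\pi}{2}x'^2>2(\K(x')-\E(x'))(\E(x)-x'^2\K(x))$, and then you only assert that this ``requires a careful second-order estimate'' or ``an appeal to a known lemma.'' That is not a proof. The inequality is true but genuinely tight: both sides are asymptotic to $\frac{\pi}{2}x'^2$ as $x\to1^-$, and the positivity of the difference is governed by a lower-order term of size $O(x'^4\log(1/x'))$, so a crude estimate will not close it. As written, the entire case $\rho_{\B}(x,y)\ge 1$ of the theorem is therefore unproved.

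For comparison with the paper: the paper does not prove this theorem at all --- it is quoted from \cite[Theorem 1.10]{bv} --- and the only proof content the paper adds is the corrected argument for the monotonicity of $g(r)=\arth(\varphi_K(r))/(\arth r)^{1/K}$, which your $\rho\le 1$ branch reproduces essentially verbatim. So your proposal is more ambitious than what the paper actually does, but the extra branch you take on is precisely the one left incomplete. To finish it you would need either to prove the decreasing monotonicity of $(2\E(x)-x'^2\K(x))\K(x')$ honestly (expanding $\E,\K$ near $x=1$ to the logarithmic order, or locating the corresponding monotonicity statement in \cite{avv1}), or to replace this route for $\rho\ge 1$ by whatever argument \cite{bv} actually uses; citing the source, as the paper does, is of course also legitimate, but then the proof is no longer self-contained.
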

The proof of this theorem involves the following monotonicity of the transcendental function $\varphi_K(r)$, see \cite[Lemma 4.8]{bv}.
\begin{lem}
For $K>1$, the function
$$
g(r)\equiv\frac{\arth(\varphi_K(r))}{(\arth r)^{1/K}}
$$
is strictly increasing on $(0,1)$.
\end{lem}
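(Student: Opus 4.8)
The plan is to imitate the logarithmic-differentiation argument used for Lemma~\ref{vs2}(2), reducing the whole claim to the monotonicity of a single auxiliary function. Writing $s=\varphi_K(r)$ and $r'=\sqrt{1-r^2}$, $s'=\sqrt{1-s^2}$, I would first take logarithms,
$$\log g(r)=\log\arth(s)-\frac 1K\log\arth(r),$$
and differentiate. Using $(\arth r)'=1/r'^2$ together with the derivative formula
$$\frac{\partial\varphi_K(r)}{\partial r}=\frac 1K\frac{ss'^2\K(s)^2}{rr'^2\K(r)^2},$$
one computes $(\arth s)'=\frac 1K\frac{s\K(s)^2}{rr'^2\K(r)^2}$ (the factor $s'^2$ cancels against $1/(1-s^2)$), and after collecting terms
$$\frac{g'(r)}{g(r)}=\frac{1}{Krr'^2\K(r)^2}\Bigl[\psi(s)-\psi(r)\Bigr],\qquad \psi(t)\equiv\frac{t\K(t)^2}{\arth(t)}.$$
Since $s=\varphi_K(r)>r$ for $K>1$ and the prefactor is positive, it suffices to prove that $\psi$ is strictly increasing on $(0,1)$.

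To establish the monotonicity of $\psi$ I would apply the monotone form of l'H\^opital's rule (Lemma~\ref{lhr}) to the quotient $\psi(r)=p(r)/q(r)$ with $p(r)=r\K(r)^2$ and $q(r)=\arth(r)$, both of which vanish at $r=0$, on each subinterval $[0,b]\subset[0,1)$. The decisive computation is that $p'/q'$ simplifies dramatically: using $q'(r)=1/r'^2$, the product rule $p'=\K^2+2r\K\K'$, and the standard derivative $\K'(r)=(\E(r)-r'^2\K(r))/(rr'^2)$, the $r'^2\K^2$ contributions partly cancel and one finds
$$\frac{p'(r)}{q'(r)}=r'^2\K^2+2\K(\E-r'^2\K)=\K(r)\bigl(2\E(r)-r'^2\K(r)\bigr)=\K(r)\,f_6(r),$$
where $f_6$ is exactly the function treated in Lemma~\ref{vs1}(6). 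Both factors are positive; $\K(r)$ is strictly increasing (the standard monotonicity of the complete elliptic integral, from $\pi/2$ to $\infty$) and $f_6$ is strictly increasing by Lemma~\ref{vs1}(6). Hence $p'/q'$ is strictly increasing, and Lemma~\ref{lhr} gives that $\psi$ is strictly increasing on $(0,1)$. Consequently $\psi(s)>\psi(r)$, the bracket in the displayed expression for $g'/g$ is positive, and therefore $g'(r)>0$, which proves the claim.

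The main obstacle is precisely the algebraic collapse of $p'/q'$ to $\K\cdot f_6$. The monotonicity of $\psi$ is not at all evident a priori (one can check $\psi(0^+)=\pi^2/4$ while $\psi(1^-)=\infty$), and the entire argument hinges on recognizing that, once the formula for $\K'$ is substituted and the $r'^2\K^2$ terms are combined, the derivative quotient reduces to $\K(2\E-r'^2\K)$, i.e. $\K$ times the already-analyzed function $f_6$ of Lemma~\ref{vs1}(6). After this identity is in hand, every remaining step is a routine invocation of the monotonicity facts supplied by Lemmas~\ref{vs1} and~\ref{lhr} and the elementary inequality $\varphi_K(r)>r$ for $K>1$.
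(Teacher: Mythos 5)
Your proposal is correct and follows essentially the same route as the paper's own correction: logarithmic differentiation reducing everything to the monotonicity of $t\mapsto t\K(t)^2/\arth t$, which is then established via the monotone l'H\^opital rule from the identity $p'/q'=\K(r)\bigl(2\E(r)-r'^2\K(r)\bigr)$ and Lemma \ref{vs1}(6), together with $\varphi_K(r)>r$. All computations check out.
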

There exists, however, a gap in the proof of the above result due to the using of a wrong claim that the function $r\mapsto \varphi_K(r)/r$ is increasing on $(0,1)$. In fact, by \cite[Corollary]{wzc}, the function $\varphi_K(r)$ is concave in $r$ if $K>1$, and hence the function $r\mapsto \varphi_K(r)/r$ is decreasing on $(0,1)$, see Lemma \ref{vs1}(4). We give a correction for the proof of the monotonicity of the function $g$ as follows.
\begin{proof}
Let $g_1(r)=r\K(r)^2/\arth r=g_{11}(r)/g_{12}(r)$, where $g_{11}(r)=r\K(r)^2$ and $g_{12}(r)=\arth r$. Then $g_{11}(0)=g_{12}(0)=0$, and
$$
\frac{g_{11}'(r)}{g_{12}'(r)}=\K(r)(2\E(r)-r'^2\K(r))\,,
$$
which is strictly increasing by Lemma \ref{vs1}(6) and implies that $g_1(r)$ is also strictly increasing by Lemma \ref{lhr}. Let $s=\varphi_K(r)$. Then we have that
\begin{equation}\label{inequal-correction1}
\frac{s\K(s)^2}{\arth s}-\frac{r\K(r)^2}{\arth r}>0
\end{equation}
since $s>r$ for all $K>1$ and $0<r<1$.
By logarithmic differentiation, we get that
$$
\frac{g'(r)}{g(r)}=\frac{1}{K}\frac{1}{rr'^2\K(r)^2}\left(\frac{s\K(s)^2}{\arth s}-\frac{r\K(r)^2}{\arth r}\right)>0\,,
$$
which implies that for given $K>1$, the function $g$ is strictly increasing on $(0,1)$.
\end{proof}

The following result gives the sharp distortion of the distance ratio metric and  the quasihyperbolic metric under quasiregular mappings of the unit disk into itself. A similar result has been obtained in \cite[Theorem 1.8]{kvz} for the higher dimensional case.

\begin{cor}
If $f:\B\rightarrow \R^2$ is a non-constant $K$-quasiregular map with $f\B\subset\B$, then
$$j_{\B}(f(x),f(y))\le 2c(K) \max\{j_{\B}(x,y),\,j_{\B}(x,y)^{1/K}\}$$
and
$$k_{\B}(f(x),f(y))\le 2c(K) \max\{k_{\B}(x,y),\,k_{\B}(x,y)^{1/K}\}$$
for all $x,\,y\in\B$, where $c(K)$ is the same as in Theorem \ref{bvtrans}.
\end{cor}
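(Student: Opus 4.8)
The plan is to reduce everything to the hyperbolic estimate of Theorem \ref{bvtrans} using only the two-sided comparisons \eqref{jrho} and \eqref{jk}, which hold uniformly on $\B\times\B$ for both $j_{\B}$ and $k_{\B}$. Since the argument is verbatim the same for the two metrics, I would treat them simultaneously: let $m_{\B}$ denote either $j_{\B}$ or $k_{\B}$, so that both \eqref{jrho} and \eqref{jk} furnish $\tfrac12\rho_{\B}\le m_{\B}\le\rho_{\B}$.

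First, applying the upper bound $m_{\B}\le\rho_{\B}$ at the image points and then invoking Theorem \ref{bvtrans}, I would record
$$m_{\B}(f(x),f(y))\le\rho_{\B}(f(x),f(y))\le c(K)\max\{\rho_{\B}(x,y),\,\rho_{\B}(x,y)^{1/K}\}.$$
Next I would convert the hyperbolic distances on the right back to $m_{\B}$ using the lower comparison, rewritten as $\rho_{\B}(x,y)\le 2\,m_{\B}(x,y)$. Because $t\mapsto t$ and $t\mapsto t^{1/K}$ are each increasing on $(0,\infty)$, so is their pointwise maximum; hence the substitution $\rho_{\B}(x,y)\le 2\,m_{\B}(x,y)$ gives
$$\max\{\rho_{\B}(x,y),\,\rho_{\B}(x,y)^{1/K}\}\le\max\{2\,m_{\B}(x,y),\,(2\,m_{\B}(x,y))^{1/K}\}.$$
Finally, since $K\ge 1$ forces $2^{1/K}\le 2$, the right-hand side is at most $2\max\{m_{\B}(x,y),\,m_{\B}(x,y)^{1/K}\}$; combining this with the previous display yields the asserted inequality with constant $2c(K)$, simultaneously for $j_{\B}$ and $k_{\B}$.

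There is no genuine obstacle here: this is a routine corollary of Theorem \ref{bvtrans}. The only mildly delicate bookkeeping is to justify the monotone substitution inside the $\max$ and to observe that the stray factor $2^{1/K}$ produced by the $1/K$-power is absorbed into the single constant $2$ precisely because $K\ge 1$.
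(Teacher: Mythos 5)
Your proposal is correct and follows essentially the same route as the paper: both arguments bound the image distance by $\rho_{\B}$, apply Theorem \ref{bvtrans}, convert back via $\rho_{\B}\le 2\,m_{\B}$ from \eqref{jrho} and \eqref{jk}, and absorb the stray factor $2^{1/K}$ into the constant $2$ using $K\ge 1$. The only difference is cosmetic (you chain inequalities where the paper manipulates the ratio), so nothing further is needed.
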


\begin{proof}
For the distance ratio metric, we combine the comparison relation \eqref{jrho} with Theorem \ref{bvtrans} to get that, for $x,y\in \B$ and $x\neq y$,
\begin{align*}
\frac{j_\B(f(x),f(y))}{\max\{j_\B(x,y),j_\B(x,y)^{1/K}\}} & \leq \frac{\rho_\B(f(x),f(y))}{\max\{\frac{1}{2}\rho_\B(x,y), \frac{1}{2^{1/K}}\rho_\B(x,y)^{1/K}\}}\\
                                                          & = \frac{2\rho_\B(f(x),f(y))}{\max\{\rho_\B(x,y), 2^{1-1/K}\rho_\B(x,y)^{1/K}\}}\\
                                                          & \leq \frac{2\rho_\B(f(x),f(y))}{\max\{\rho_\B(x,y), \rho_\B(x,y)^{1/K}\}}\\
                                                          & \leq 2c(K).
\end{align*}
By use of the inequality \eqref{jk} and a similar argument with the distance ratio metric, we get the result for the quasihyperbolic metric.
\end{proof}


\bigskip


\section{Convex domains and bilipschitz maps }

The following theorem shows the comparison of the visual angle metric and the distance ratio metric on convex domains, which is the main tool to prove Theorem \ref{mthf}.
\begin{thm}\label{bcb}
Let $G$ be a proper convex subdomain of $\Rn$. Let $x,\,y\in G$ and $t=e^{j_G(x,y)}-1$. 
Then
$$\arcsin\frac{t}{t+2}\le v_G(x,y)\le 2\arcsin \frac{t}{\sqrt{4+t^2}}.$$
The left-hand side of the inequality  is sharp and the constant $2$ in the right-hand side of the inequality is the best possible.
\end{thm}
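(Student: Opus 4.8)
The plan is to sandwich $v_G(x,y)$ between the visual angle metrics of two explicit convex comparison domains built from the radius $m:=\min\{d(x,\partial G),d(y,\partial G)\}$ (so that $t=|x-y|/m$): the closed $m$-neighbourhood (``sausage'') of the segment $[x,y]$ from inside, and a supporting half-space from outside, and then to exploit the monotonicity property $G_1\subset G_2\Rightarrow v_{G_1}\ge v_{G_2}$. It is convenient to record the identities $2\arcsin\frac{t}{\sqrt{4+t^2}}=2\arctan\frac t2$ and $\arcsin\frac{t}{t+2}=\arctan\frac{t}{2\sqrt{1+t}}$, which are the forms actually produced by the two computations.

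First I would establish the upper bound. Since $G$ is convex, the function $p\mapsto d(p,\partial G)$ is concave on $G$, so $d(p,\partial G)\ge m$ for every $p\in[x,y]$ and hence the closed set $S=\{w:\dist(w,[x,y])\le m\}$ is contained in $G$. Consequently every $z\in\partial G$ satisfies $\dist(z,[x,y])\ge m$. The key geometric lemma is: if $\dist(z,[x,y])\ge m$ and $|x-y|=tm$, then $\ang(x,z,y)\le 2\arctan\frac t2$. I would prove this by passing to the plane through $x,y,z$ and using the inscribed-angle description of the super-level set $\{w:\ang(x,w,y)\ge\alpha\}$: it is the lens bounded by the two circular arcs on the chord $[x,y]$ of inscribed angle $\alpha$, and a sagitta computation shows this lens lies in the $\big(\tfrac{|x-y|}2\cot\tfrac\alpha2\big)$-neighbourhood of $[x,y]$. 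Taking $\alpha=2\arctan\frac t2$ makes this radius exactly $m$, so a point at distance $\ge m$ cannot see $[x,y]$ under a larger angle. Taking the supremum over $z\in\partial G$ yields $v_G(x,y)\le 2\arctan\frac t2=2\arcsin\frac{t}{\sqrt{4+t^2}}$.

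For the lower bound I would assume without loss of generality that $d(x,\partial G)=m$, let $z_0\in\partial G$ be the nearest boundary point to $x$, and let $H^+$ be the supporting half-space of $G$ at $z_0$; then $G\subset H^+$ gives $v_G(x,y)\ge v_{H^+}(x,y)$. In $H^+$ the perpendicular distance $d_1$ of $x$ to $\partial H^+$ equals $m$, while the height $d_2$ of $y$ satisfies $d_2\le m+|x-y|=m(1+t)$ because the height is a $1$-Lipschitz function of position. Now I would invoke Lemma \ref{mthm1}, namely $v_{H^+}(x,y)\ge\rho_{H^+}^*(x,y)=\arctan\big(\sh\tfrac{\rho_{H^+}(x,y)}2\big)$, together with \eqref{cosh}, which gives $\sh\frac{\rho_{H^+}(x,y)}2=\frac{|x-y|}{2\sqrt{d_1d_2}}=\frac t2\sqrt{m/d_2}\ge\frac{t}{2\sqrt{1+t}}$. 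Since $\arctan$ is increasing, $v_G(x,y)\ge\arctan\frac{t}{2\sqrt{1+t}}=\arcsin\frac{t}{t+2}$.

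Finally, the sharpness statements come from the extremal domains. For the lower bound, the half-space with $y$ placed vertically above $x$ realises $d_2=m(1+t)$ and equality in Lemma \ref{mthm1}, so $v=\arcsin\frac{t}{t+2}$ exactly; for the upper bound, the sausage $S$ itself satisfies $d(x,\partial S)=d(y,\partial S)=m$, and its boundary point at distance $m$ over the midpoint of $[x,y]$ sees the segment under the angle $2\arctan\frac t2$, showing the constant $2$ cannot be lowered. I expect the main work to be the geometric lemma behind the upper bound, that is, the precise distance/angle trade-off and the verification that the stadium attains it, whereas the lower bound is essentially immediate once the comparison with the supporting half-space is in place, thanks to Lemma \ref{mthm1}.
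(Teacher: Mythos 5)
Your proof is correct, and its skeleton is the same as the paper's: sandwich $G$ between a supporting half-space at the boundary point nearest to $x$ and the radius-$m$ sausage around $[x,y]$ (which the paper writes as $\conv\left(B^n(x,d(x))\cup B^n(y,d(x))\right)$), and then use the monotonicity of $v$ under inclusion. The upper bound is handled identically; your lens/sagitta computation is just the explicit evaluation of $v$ for the stadium, and your sharpness example for the constant $2$ works (the paper instead takes $[x,y]$ parallel to $\partial\Hn$, which also gives equality). The one genuine difference is the lower bound inside the half-space: the paper replaces $y$ by the point $y'\in\ray(x,x-z)$ with $|x-y'|=|x-y|$, asserts $v_{H_G}(x,y)\ge v_{H_G}(x,y')$, and computes $v_{H_G}(x,y')=\arcsin\frac{t}{t+2}$ by elementary circle geometry, whereas you invoke the left-hand inequality of Lemma \ref{mthm1} together with $\sh\frac{\rho_{\Hn}(x,y)}{2}=\frac{|x-y|}{2\sqrt{d_1 d_2}}$ (from \eqref{cosh}) and the Lipschitz bound $d_2\le m(1+t)$. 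Your route has the advantage of not needing the intuitively clear but unproved monotonicity claim $v_{H_G}(x,y)\ge v_{H_G}(x,y')$, at the cost of importing the sharp comparison of Lemma \ref{mthm1} from \cite{klvw}; the paper's route is self-contained and purely Euclidean. Both give exactly $\arcsin\frac{t}{t+2}=\arctan\frac{t}{2\sqrt{1+t}}$, so the argument is complete.
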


\begin{figure}[ht]
  \begin{center}
    \includegraphics[height=6cm]{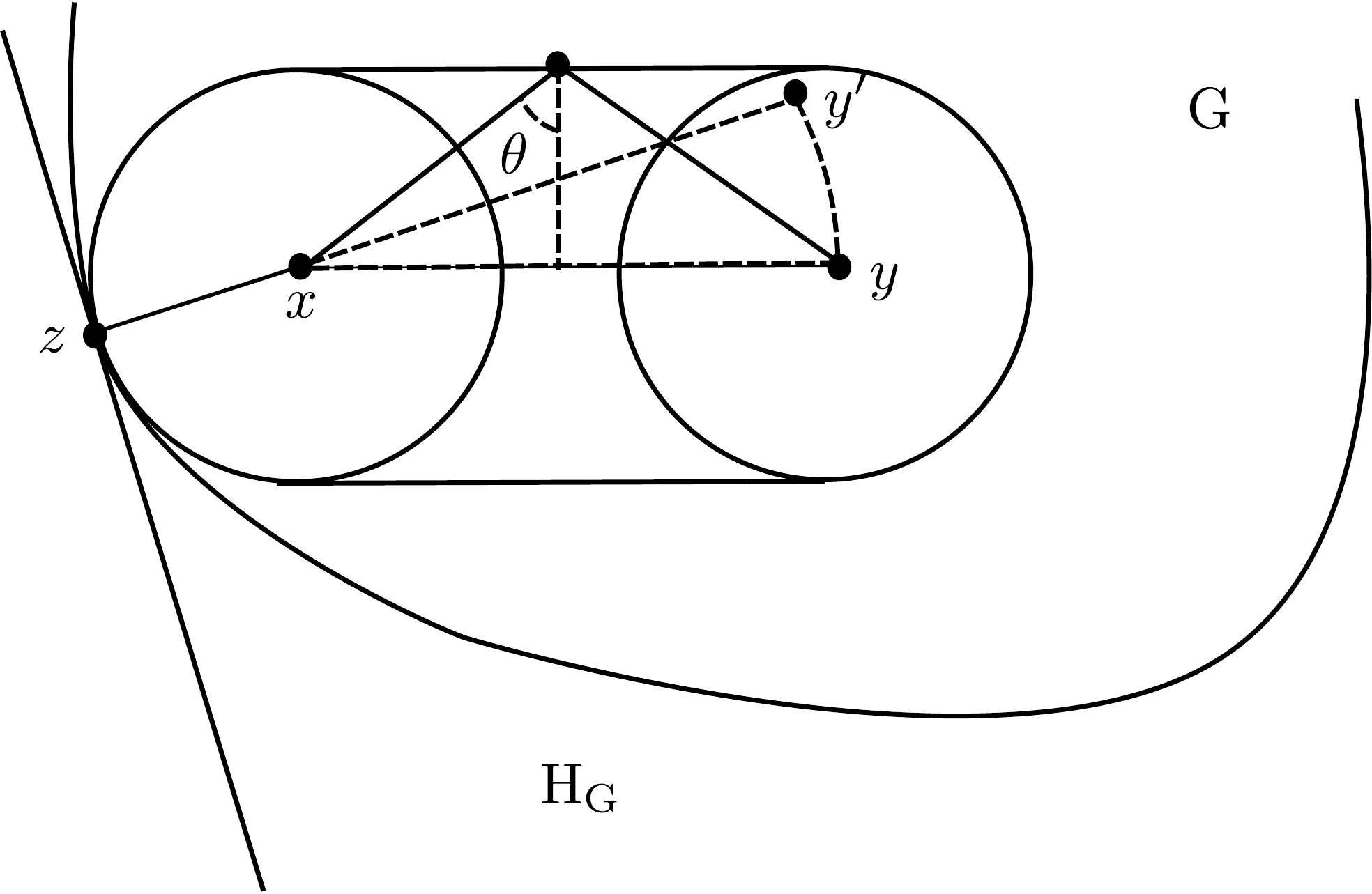}
  \end{center}
  \caption{\label{cv} }
\end{figure}
\begin{proof}

Without loss of generality, we may assume that $d(x)\le d(y)$. Then $t=\frac{|x-y|}{d(x)}$. Choose $z\in \partial G$ such that $d(x)=|x-z|$. Let $y'\in {\rm ray}(x, x-z)$ with $|y-x|=|y'-x|$. Let $H_G,\, x \in H_G\,,$ be the half space whose boundary is orthogonal to $[x,z]$ at the point $z$. It is easy to see that $G\subset H_G$. Let $B_{xy}$ be the convex hull of $B^n(x,d(x)) \cup B^n(y,d(x))\,,$ see Figure \ref{cv}.
By the monotonicity property of $v_G$, we have
$$v_{H_G}(x,y)\le v_G(x,y)\le v_{B_{xy}}(x,y),$$
where
$$v_{H_G}(x,y)\ge v_{H_G}(x,y')=\arcsin\frac{|x-y|/2}{d(x)+|x-y|/2}=\arcsin\frac{t}{t+2}\,,$$
and
$$v_{B_{xy}}(x,y)= 2\theta=2\arcsin\frac{|x-y|/2}{\sqrt{d(x)^2+(|x-y|/2)^2}}=2\arcsin\frac{t}{\sqrt{4+t^2}}.$$

For the sharpness of the left-hand side of the inequality, we consider the domain $\Hn$ and two points $x\,,y\in \Hn$ with $L(x,y-x)$ perpendicular to the boundary $\partial \Hn$.

For the sharpness of the right-hand side of the inequality, we consider the domain $\Hn$ and two points $x\,,y\in \Hn$ with $L(x,y-x)$ parallel to the  boundary $\partial \Hn$.


This completes the proof.
\end{proof}

\begin{proof}[Proof of Theorem \ref{mthf}]
Let $t=e^{j_{G_1}(x,y)}-1$. Given $\varepsilon>0$, for small enough $t>0$, by Theorem \ref{bcb} we have
$$\frac{t}{2(1+\varepsilon)}\le\arcsin\frac{t}{t+2}\le v_{G_1}(x,y)\le 2\arcsin \frac{t}{\sqrt{4+t^2}}\le 2\arcsin \frac{t}{2}\le (1+\varepsilon) t\,.$$
Then we have
\begin{eqnarray*}
\frac{|f(x)-f(y)|}{|f(x)-f(z)|}
&\le&\frac{2(1+\varepsilon)\,\min\{d(f(x))\,,d(f(y))\}\,v_{G_2}(f(x),f(y))}{ \frac 1{1+\varepsilon}\,\min\{d(f(x))\,,d(f(z))\}\,v_{G_2}(f(x),f(z))}\\
&\le&\frac{2(1+\varepsilon)^2\,\min\{d(f(x))\,,d(f(y))\}L\,v_{G_1}(x,y)}{\min\{d(f(x))\,,d(f(z))\}\,v_{G_1}(x,z)/L}\\
&\le&\frac{2(1+\varepsilon)^2\,\min\{d(f(x))\,,d(f(y))\}\,L(1+\varepsilon)\,\frac{|x-y|}{\min\{d(x)\,,d(y)\}}}
{\min\{d(f(x))\,,d(f(z))\}\,\frac{1}{2(1+\varepsilon)L}\,\frac{|x-z|}{\min\{d(x)\,,d(z)\}}}\\
&=&4(1+\varepsilon)^4\,L^2\cdot\frac{\min\{d(f(x))\,,d(f(y))\}}{\min\{d(f(x))\,,d(f(z))\}}\cdot\frac{\min\{d(x)\,,d(z)\}}{\min\{d(x)\,,d(y)\}}\cdot\frac{|x-y|}{|x-z|}\\
&\rightarrow&4(1+\varepsilon)^4\,L^2\,,\quad\quad{\rm as}\quad |x-y|=|x-z|=r\rightarrow 0.
\end{eqnarray*}
Since $\varepsilon$ is arbitrary, we have $H(x,f)\le 4L^2$. Hence $f$ is quasiconformal and with linear dilatation at most $4L^2$.
\end{proof}

\begin{rem}
By Theorem \ref{mthf}, a bilipschitz map from the unit ball onto itself with respect to the visual angle metric is a quasiconformal map. However, the converse is not true.

For example, let $a\in(0,1)$ and $z\in \Bn$. The radial map $f(z)=z/|z|^{1-a}$ is quasiconformal \cite[16.2]{va1}. But,
if putting $|x|=|y|$, $2\theta=\ang(x,0,y)>0$, then by (\ref{omega1x}) we have
\begin{eqnarray*}
\lim_{x\rightarrow0}\frac{v_{\Bn}(f(x),f(y))}{v_{\Bn}(x,y)}=\lim_{x\rightarrow0}\frac{\arctan\frac{|x|^a\sin\theta}{1-|x|^a\cos\theta}}{\arctan\frac{|x|\sin\theta}{1-|x|\cos\theta}}
=\lim_{x\rightarrow0} |x|^{-(1-a)}\frac{1-|x|\cos\theta}{1-|x|^a\cos\theta}=\infty.
\end{eqnarray*}
\end{rem}

\begin{lem}\label{lerho1}
Let $L\in[1,\infty)$ and $\varepsilon\in(0,1)$. Then\\
(1) $f_1(r)\equiv \frac{\arcsin r}{\log(1+r)}$ is strictly increasing from $(0, 1)$ onto $(1, \pi/\log 4)$;\\
(2) $f_2(r)\equiv \frac{\sin(4L r)}{\sin r}$ is strictly decreasing from $(0,\frac{\pi}{8L})$ onto $((\sin\frac{\pi}{8L})^{-1}\,,4L)$;\\
(3) $f_3(r)\equiv \frac{{\rm arth}(4L r)}{{\rm arth} r}$ is strictly increasing from $(0, \frac{\varepsilon}{4L})$ onto $(4L\,,{\rm arth} \varepsilon/{\rm arth}\frac{\varepsilon}{4L})$.
\end{lem}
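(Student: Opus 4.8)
The plan is to treat all three parts uniformly by the monotone form of l'H\^opital's rule (Lemma~\ref{lhr}): in each case $f_i$ is a quotient $g_{i1}/g_{i2}$ of two functions that vanish at the left endpoint of the relevant interval, so it suffices to show that the quotient of derivatives $g_{i1}'/g_{i2}'$ is strictly monotone in the claimed direction and then to read off the two endpoint values, the left one by l'H\^opital's rule and the right one by direct substitution. The derivative quotients turn out to be elementary in every case, so the bulk of the argument is a short monotonicity check plus two limit evaluations per part.

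For part (1), I would take $g_{11}(r)=\arcsin r$ and $g_{12}(r)=\log(1+r)$, both vanishing at $0$. A short computation gives
$$\frac{g_{11}'(r)}{g_{12}'(r)}=\frac{1+r}{\sqrt{1-r^2}}=\sqrt{\frac{1+r}{1-r}},$$
which is plainly strictly increasing on $(0,1)$; hence $f_1$ is strictly increasing by Lemma~\ref{lhr}. The value $f_1(0^+)=1$ follows from l'H\^opital's rule applied to the quotient above, and $f_1(1^-)=(\pi/2)/\log 2=\pi/\log 4$ by substitution. For part (3), with $g_{31}(r)=\arth(4Lr)$ and $g_{32}(r)=\arth r$ (both $0$ at $0$), one finds
$$\frac{g_{31}'(r)}{g_{32}'(r)}=\frac{4L(1-r^2)}{1-16L^2r^2}.$$
Differentiating the factor $(1-r^2)/(1-16L^2r^2)$ produces a numerator proportional to $r(16L^2-1)$, which is positive for $L\ge 1$ and $r>0$; thus the ratio is strictly increasing and so is $f_3$ on $(0,\varepsilon/(4L))$. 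The endpoint values $4L$ and $\arth\varepsilon/\arth\frac{\varepsilon}{4L}$ follow from l'H\^opital's rule and substitution, using $4Lr<\varepsilon<1$ to keep everything inside the domain of $\arth$ and to ensure $1-16L^2r^2>0$.

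Part (2) is the one requiring genuine care. Here $g_{21}(r)=\sin(4Lr)$ and $g_{22}(r)=\sin r$ vanish at $0$, and
$$\frac{g_{21}'(r)}{g_{22}'(r)}=\frac{4L\cos(4Lr)}{\cos r}.$$
The main obstacle is to verify that $\cos(4Lr)/\cos r$ is strictly decreasing on $(0,\pi/(8L))$. Its derivative has numerator $\cos(4Lr)\sin r-4L\sin(4Lr)\cos r$, and I must show this is negative. On the interval in question both $4Lr$ and $r$ lie in $(0,\pi/2)$ with $4Lr>r$ (since $L\ge 1$), so $\sin(4Lr)>\sin r$ and $\cos(4Lr)<\cos r$; substituting these two bounds shows the numerator is at most $(1-4L)\sin r\cos r<0$. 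Hence $g_{21}'/g_{22}'$ is strictly decreasing, so $f_2$ is strictly decreasing by Lemma~\ref{lhr}, with $f_2(0^+)=4L$ from l'H\^opital's rule and $f_2(\pi/(8L)^-)=1/\sin(\pi/(8L))$ by substitution. I expect this last sign estimate, rather than any of the routine limit computations, to be the only delicate step in the whole lemma.
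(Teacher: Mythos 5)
Your proposal is correct and follows essentially the same route as the paper: each part is handled by the monotone form of l'H\^opital's rule (Lemma~\ref{lhr}) with exactly the same numerator/denominator decompositions, the same derivative quotients, and the same endpoint evaluations. The only (immaterial) difference is in part (2), where you bound the numerator $\cos(4Lr)\sin r-4L\sin(4Lr)\cos r$ by comparing $\sin(4Lr)>\sin r$ and $\cos(4Lr)<\cos r$ on $(0,\pi/(8L))$, whereas the paper rewrites it via a product-to-sum identity as $-4L\sin((4L-1)r)-(4L-1)\sin r\cos(4Lr)$; both verifications are valid.
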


\begin{proof}
(1) Let $f_1(r)=\frac{f_{11}(r)}{f_{12}(r)}$ with $f_{11}(r)=\arcsin r$ and $f_{12}(r)=\log (1+r)$. It is clear that $f_{11}(0^+)=f_{12}(0^+)=0$.
By differentiation,
$$\frac{f'_{11}(r)}{f'_{12}(r)}=\sqrt{\frac{1+r}{1-r}},$$
which is increasing on $(0,1)$. Therefore, $f_1$ is strictly increasing by Lemma \ref{lhr}. The limiting value $f_1(1^-)=\pi/\log 4$ is clear and $f_1(0^+)=1$ by l'H\^opital's Rule.

(2) Let $f_{21}(r)=\sin(4L r)$ and $f_{22}(r)=\sin r$. It is clear that $f_{21}(0^+)=f_{22}(0^+)=0$. Then
$$\frac{f'_{21}(r)}{f'_{22}(r)}=4L\frac{\cos(4L r)}{\cos r}\equiv 4L h(r).$$
Since
$$\cos^2r h'(r)=-4L\sin((4L-1)r)-(4L-1)\sin r\cos(4L r)<0,$$
by Lemma \ref{lhr}, $f_2$ is strictly decreasing on $(0,\frac{\pi}{8L})$. The limiting value $f_2(\frac{\pi}{8L})=(\sin\frac{\pi}{8L})^{-1}$ is clear and $f_2(0^+)=4L$ by l'H\^opital Rule.

(3) Let $f_{31}(r)={\rm arth}(4L r)$ and $f_{32}(r)={\rm arth}\, r$. It is clear that $f_{31}(0^+)=f_{32}(0^+)=0$. Then
$$\frac{f'_{31}(r)}{f'_{32}(r)}=4L\frac{1-r^2}{1-(4L r)^2}=\frac 1{4L}\left(1+\frac{(4L)^2-1}{1-(4L r)^2}\right),$$
which is strictly increasing on $(0, \frac{\varepsilon}{4L})$. Therefore, $f_3$ is strictly increasing on $(0, \frac{\varepsilon}{4L})$
by Lemma \ref{lhr}. The limiting value $f_3( \frac{\varepsilon}{4L})={\rm arth} \varepsilon/{\rm arth}\frac{\varepsilon}{4L}$ is clear and $f_3(0^+)=4L$ by l'H\^opital Rule.

\end{proof}

\begin{thm}\label{vk}
Let $G$ be a proper convex subdomain of $\Rn$. For all $x,\,y\in G$, there holds
$$v_G(x,y)\le c k_G(x,y),$$
where $c=\frac{\pi}{\log 4}\approx 2.26618$.
\end{thm}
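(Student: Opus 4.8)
The plan is to control $v_G$ by the distance ratio metric $j_G$ with the constant $\pi/\log 4$, and then to invoke the elementary comparison $j_G(x,y)\le k_G(x,y)$, which holds in every domain (see \cite{gp,vu2}). Since $G$ is convex, I can apply Theorem \ref{bcb}: writing $t=e^{j_G(x,y)}-1$, so that $j_G(x,y)=\log(1+t)$, it gives the sharp bound
\[
v_G(x,y)\le 2\arcsin\frac{t}{\sqrt{4+t^2}}=2\arctan\frac t2 .
\]
Hence everything reduces to the one-variable inequality
\[
2\arctan\frac t2\le \frac{\pi}{\log 4}\,\log(1+t),\qquad t>0,
\]
because then $v_G(x,y)\le 2\arctan\frac t2\le \frac{\pi}{\log 4}\log(1+t)=\frac{\pi}{\log 4}\,j_G(x,y)\le \frac{\pi}{\log 4}\,k_G(x,y)$.

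To prove this one-variable inequality I would first use Lemma \ref{lerho1}(1), which says $\arcsin r\le \frac{\pi}{\log 4}\log(1+r)$ for $r\in(0,1)$, the constant being attained in the limit $r\to1^-$. For $0<t\le 2$ the doubled angle $2\arctan\frac t2$ stays in $(0,\pi/2]$, so the half-angle identity $2\arctan\frac t2=\arcsin\frac{4t}{4+t^2}$ is valid with $r:=\frac{4t}{4+t^2}\in(0,1]$; Lemma \ref{lerho1}(1) then yields $2\arctan\frac t2\le \frac{\pi}{\log 4}\log\!\bigl(1+\frac{4t}{4+t^2}\bigr)$, and since $\frac{4t}{4+t^2}\le t$ the target follows on $(0,2]$. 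The crux, and the main obstacle, is the factor $2$ carried by the sharp constant of Theorem \ref{bcb}: once $t>2$ the doubled angle passes $\pi/2$, the identity fails, and one cannot move the factor $2$ inside the logarithm without destroying the constant (indeed $\bigl(1+\frac{t}{\sqrt{4+t^2}}\bigr)^2>1+t$ for small $t$, so the naive step would only give $2\pi/\log 4$).

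I would clear the obstacle, and in fact handle all of $(0,\infty)$ at once, by a direct monotonicity argument that supersedes the case $t\le 2$. Put $g(t)=\frac{\pi}{\log 4}\log(1+t)-2\arctan\frac t2$, so that $g(0)=0$ and
\[
g'(t)=\frac{\pi}{(\log 4)(1+t)}-\frac{4}{4+t^2}.
\]
The positivity of $g'$ is equivalent to $\pi t^2-4(\log 4)\,t+4\pi-4\log 4>0$, a quadratic with positive leading coefficient and negative discriminant, hence positive for all $t$; therefore $g$ is strictly increasing and $g(t)>0$ for $t>0$, which is exactly the inequality needed. Combined with Theorem \ref{bcb} and $j_G\le k_G$ this completes the proof. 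I would note that $\pi/\log 4$ is a clean rather than sharp constant: the extremal convex configuration, a half-space with $[x,y]$ parallel to the boundary, shows the true ratio $v_G/j_G$ stays bounded well below $\pi/\log 4$.
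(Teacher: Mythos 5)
Your proof is correct, but it follows a genuinely different route from the paper's. The paper localizes: it picks points $z_0=x,\dots,z_n=y$ on the quasihyperbolic geodesic so that each $t_i=|z_{i+1}-z_i|/\min\{d(z_i),d(z_{i+1})\}$ lies in $(0,1)$, applies the chain $2\arcsin\frac{t_i}{\sqrt{4+t_i^2}}\le 2\arcsin\frac{t_i}{2}\le\arcsin t_i\le\frac{\pi}{\log 4}\log(1+t_i)$ (which only makes sense for $t_i\le 1$), and then sums using the triangle inequality for $v_G$ together with the additivity of $k_G$ along its geodesic --- this is precisely why the paper's argument produces $k_G$ and not $j_G$ on the right. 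You instead prove the global one-variable inequality $2\arctan\frac t2\le\frac{\pi}{\log 4}\log(1+t)$ for all $t>0$ by checking that the difference vanishes at $0$ and has positive derivative (the quadratic $\pi t^2-4(\log 4)t+4\pi-4\log 4$ indeed has negative discriminant, since $(\log 4)^2+\pi\log 4-\pi^2<0$), which combined with Theorem \ref{bcb} gives the stronger statement $v_G(x,y)\le\frac{\pi}{\log 4}\,j_G(x,y)$ on convex domains; the theorem then follows from the standard bound $j_G\le k_G$. Your approach buys a sharper conclusion (the $j$-metric bound, which cannot be obtained by the paper's subdivision since $j_G$ is not additive along paths) and avoids any appeal to quasihyperbolic geodesics; the paper's approach is the standard ``localize and sum'' template. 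One cosmetic point: your discussion of the half-angle identity and the ``obstacle'' for $t>2$ is superseded by the monotonicity argument and could be deleted without loss.
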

\begin{proof}
For arbitrary $x,y\in G$ such that $x\neq y$, the quasihyperbolic geodesic segment with end points $x,y$ is denoted by $J_k[x,y]$. Select points $\{z_i\}_{i=0}^n\in J_k[x,y]$ with $z_0=x$, $z_n=y$ such that $t_i=\frac{|z_{i+1}-z_i|}{\min\{d(z_i), d(z_{i+1})\}}\in (0,1)$,
where $i=0,\cdots,n-1$.

By Theorem \ref{bcb} and Lemma \ref{lerho1}(1), we have
\begin{eqnarray*}
v_G(z_i, z_{i+1})&\le& 2\arcsin\frac{t_i}{\sqrt{4+t_i^2}}\le2\arcsin\frac {t_i}2\le \arcsin t_i\\
&\le& \frac{\pi}{\log 4} j_G(z_i, z_{i+1})\le \frac{\pi}{\log 4} k_G(z_i, z_{i+1}).
\end{eqnarray*}

Then
$$
v_G(x,y)\leq\sum_{i=0}^{n-1}v_G(z_i, z_{i+1})\leq \frac{\pi}{\log 4} \sum_{i=0}^{n-1}k_G(z_i, z_{i+1})=\frac{\pi}{\log 4}k_G(x,y).
$$

This completes the proof.
\end{proof}

\medskip

\begin{lem}\label{lerho2}
Let $f: \Bn\rightarrow \Bn$ be an $L$-bilipschitz map with respect to the visual angle metric.
Let $\varepsilon\in(0,1)$ and $c(\varepsilon)={\rm arth} \varepsilon/{\rm arth}\frac{\varepsilon}{4L}$. Furthermore, let $x\,,y\in\Bn$ and satisfy \be\label{rhocon}
{\rm th}\frac{\rho_{\Bn}(x,y)}{2}\leq \min\left\{\frac{\varepsilon}{4L}\,,\sin\frac{\pi}{8L}\right\}.
\ee
Then $$\rho_{\Bn}(f(x),f(y))\leq c(\varepsilon) \rho_{\Bn}(x,y).$$
\end{lem}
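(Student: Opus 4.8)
The goal is to convert the hypothesis of visual-angle $L$-bilipschitzness into a comparison between the hyperbolic distances of $f(x),f(y)$ and of $x,y$, under the smallness assumption \eqref{rhocon}. The natural bridge is Lemma \ref{mthm1}, which sandwiches $v_{\Bn}$ between $\rho^*_{\Bn}(x,y)=\arctan({\rm sh}\frac{\rho_{\Bn}(x,y)}{2})$ and $2\rho^*_{\Bn}(x,y)$. The plan is to write $r={\rm th}\frac{\rho_{\Bn}(x,y)}{2}$, so that ${\rm sh}\frac{\rho_{\Bn}(x,y)}{2}=r/r'$ and $\rho^*_{\Bn}(x,y)=\arctan(r/r')$, and similarly set $R={\rm th}\frac{\rho_{\Bn}(f(x),f(y))}{2}$ with $\rho^*_{\Bn}(f(x),f(y))=\arctan(R/R')$. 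I first translate the smallness condition \eqref{rhocon}, namely $r\le\min\{\varepsilon/(4L),\sin(\pi/(8L))\}$, into usable bounds on the relevant quantities.

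First I would apply Lemma \ref{mthm1} twice to get
\[
\arctan\frac{R}{R'}\le v_{\Bn}(f(x),f(y))\le L\,v_{\Bn}(x,y)\le 2L\arctan\frac{r}{r'}.
\]
Thus $\arctan(R/R')\le 2L\arctan(r/r')$. Since $r\le\sin(\pi/(8L))$ gives $\arctan(r/r')=\arcsin r\le\pi/(8L)$, the right side is at most $\pi/4$, so $R/R'\le\tan(2L\arctan(r/r'))$, and I can pass to $R\le\sin(2L\arcsin r)$ after using $\arctan(r/r')=\arcsin r$. At this point I invoke Lemma \ref{lerho1}(2): the function $\sin(4Lr)/\sin r$ is decreasing, which I would use in the form controlling $\sin(2L\arcsin r)$ by a multiple of $\sin(\arcsin r)=r$; more precisely the monotonicity packages the estimate $\sin(2L\arcsin r)\le(\text{const})\cdot r$ on the admissible range, yielding a linear bound $R\le(\text{const})\,r$. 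The role of the cap $r\le\sin(\pi/(8L))$ is exactly to keep the argument of the sine below $\pi/2$ so these manipulations and Lemma \ref{lerho1}(2) apply.

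Next I convert the bound $R\le(\text{const})\,r$ back to hyperbolic distances. Since $R={\rm th}\frac{\rho_{\Bn}(f(x),f(y))}{2}$ and $r={\rm th}\frac{\rho_{\Bn}(x,y)}{2}$, and ${\rm arth}$ is increasing, I get $\frac12\rho_{\Bn}(f(x),f(y))={\rm arth}\,R$ and $\frac12\rho_{\Bn}(x,y)={\rm arth}\,r$. Here Lemma \ref{lerho1}(3) enters decisively: the function ${\rm arth}(4Lr)/{\rm arth}\,r$ is increasing on $(0,\varepsilon/(4L))$ with supremum value ${\rm arth}\,\varepsilon/{\rm arth}\frac{\varepsilon}{4L}=c(\varepsilon)$, so the assumption $r\le\varepsilon/(4L)$ from \eqref{rhocon} gives exactly ${\rm arth}(4Lr)\le c(\varepsilon)\,{\rm arth}\,r$. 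Combining ${\rm arth}\,R\le{\rm arth}(4Lr)\le c(\varepsilon)\,{\rm arth}\,r$ and doubling yields $\rho_{\Bn}(f(x),f(y))\le c(\varepsilon)\,\rho_{\Bn}(x,y)$.

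\textbf{Main obstacle.} The delicate step is the middle one: chaining $\arctan(R/R')\le 2L\arctan(r/r')$ down to a clean linear inequality $R\le 4Lr$ (or the constant that makes Lemma \ref{lerho1}(3) directly applicable), because it requires simultaneously controlling the $\arctan/\tan$ and $\sin$ conversions and verifying that every argument stays in the range where the auxiliary monotone functions of Lemma \ref{lerho1}(2) are valid. The two caps in \eqref{rhocon} are tailored to these two conversions respectively, and the bookkeeping of constants ($2L$ from Lemma \ref{mthm1} versus $4L$ in the statements of Lemma \ref{lerho1}) is where I would be most careful to ensure the estimates line up so that $c(\varepsilon)={\rm arth}\,\varepsilon/{\rm arth}\frac{\varepsilon}{4L}$ emerges with the stated constant.
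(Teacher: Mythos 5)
Your proof is correct, and it reaches the same intermediate inequality $R\le 4Lr$ (with $R={\rm th}\frac{\rho_{\Bn}(f(x),f(y))}{2}$, $r={\rm th}\frac{\rho_{\Bn}(x,y)}{2}$) that the paper's proof does, after which the two arguments coincide exactly: Lemma \ref{lerho1}(3) converts $R\le 4Lr$ into ${\rm arth}\,R\le{\rm arth}(4Lr)\le c(\varepsilon)\,{\rm arth}\,r$ on the range $r\le\varepsilon/(4L)$, and doubling finishes. The difference is in how that intermediate inequality is obtained. The paper conjugates $f$ by the automorphisms $T_{-x}$ and $T_{f(x)}$ to normalize both base points to the origin, notes via Lemma \ref{mthm1} that each conjugation costs a factor $2$ in the visual-angle Lipschitz constant (so $g=T_{f(x)}\circ f\circ T_{-x}$ is $4L$-bilipschitz), and then uses the exact formula \eqref{omega0x}, $v_{\Bn}(0,z)=\arcsin|z|$, together with $|T_x(y)|={\rm th}\frac12\rho_{\Bn}(x,y)$ to get $\arcsin R\le 4L\arcsin r$, whence $R\le\sin(4L\arcsin r)\le 4Lr$ by Lemma \ref{lerho1}(2). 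You instead apply the sandwich $\rho^*\le v\le 2\rho^*$ of Lemma \ref{mthm1} directly to both pairs, which (using $\arctan(r/r')=\arcsin r$) gives $\arcsin R\le 2L\arcsin r$ — only one factor of $2$ rather than two, since on the image side you only need the lower bound $\rho^*\le v$. Your route is therefore slightly more economical: it yields $R\le 2Lr$, which you then relax to $R\le 4Lr$ so that Lemma \ref{lerho1}(3) applies verbatim and the stated $c(\varepsilon)$ emerges. (With a $2L$-version of Lemma \ref{lerho1}(3) your argument would even give a smaller constant, but that is not needed for the lemma as stated.) All range checks you flag as the "main obstacle" do go through: $r\le\sin\frac{\pi}{8L}$ keeps $4L\arcsin r\le\pi/2$ so the sine manipulations and Lemma \ref{lerho1}(2) are valid, and $r\le\varepsilon/(4L)$ keeps $4Lr\le\varepsilon<1$ so ${\rm arth}(4Lr)$ is defined and Lemma \ref{lerho1}(3) applies.
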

\begin{proof}
Let $x,y\in\Bn$ satisfy (\ref{rhocon}) such that $x\neq y$. 
By Lemma \ref{mthm1}, it is not difficult to obtain that $g=T_{f(x)}\circ f\circ T_{-x} : \Bn\rightarrow\Bn$ is a $4L$-bilipschitz map with respect to the visual angle metric and $g(0)=0$. Then by (\ref{omega0x})
$$
\arcsin|T_{f(x)}(f(y))|=v_{\Bn}(g(T_x(x)),g(T_x(y)))\leq 4L v_{\Bn}(T_x(x),T_x(y))=4L\arcsin|T_x(y)|.
$$
Since $\arcsin|T_x(y)|=\arcsin{\rm th}\frac{\rho(x,y)}{2}\leq\frac{\pi}{8L}$, by Lemma \ref{lerho1}(2) we have
$$
|T_{f(x)}(f(y))|\leq \sin(4L\arcsin|T_x(y)|)\leq 4L |T_x(y)|.
$$
Hence by Lemma \ref{lerho1}(3)
$$
\rho_{\Bn}(f(x),f(y))\leq 2 {\rm arth}\left(4L{\rm th}\frac{\rho_{\Bn}(x,y)}{2}\right)\leq c(\varepsilon)\rho_{\Bn}(x,y).
$$
\end{proof}

In the following we will show that for $G,\,G'\in\{\Bn,\Hn\}$, if $f:G\rightarrow G'$ is a bilipschitz map with respect to the visual angle metric, then $f$ is bilipschitz with respect to the hyperbolic metric, too.

\begin{thm}\label{thrho}
Let $f: \Bn\rightarrow \Bn=f(\Bn)$ be an $L$-bilipschitz map with respect to the visual angle metric. Then $f$ is a $4L$-bilipschitz map with respect to the hyperbolic metric.
\end{thm}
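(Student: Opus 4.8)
The plan is to establish the two bilipschitz inequalities separately: I would derive the upper bound $\rho_{\Bn}(f(x),f(y)) \le 4L\,\rho_{\Bn}(x,y)$ from the purely local estimate of Lemma \ref{lerho2} by a chaining argument along hyperbolic geodesics, and then obtain the lower bound by applying this upper bound to the inverse map $f^{-1}$, which is itself $L$-bilipschitz with respect to the visual angle metric.

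For the upper bound, fix $x,y\in\Bn$ and $\varepsilon\in(0,1)$, and let $\gamma$ be the hyperbolic geodesic segment joining $x$ to $y$. I would choose points $x=z_0,z_1,\dots,z_N=y$ in order along $\gamma$ with $\rho_{\Bn}(z_i,z_{i+1})=\rho_{\Bn}(x,y)/N$, taking $N$ large enough that each consecutive pair satisfies the smallness hypothesis \eqref{rhocon}, i.e. ${\rm th}\frac{\rho_{\Bn}(z_i,z_{i+1})}{2}\le\min\{\frac{\varepsilon}{4L},\sin\frac{\pi}{8L}\}$; this is possible since $\rho_{\Bn}(z_i,z_{i+1})\to0$ as $N\to\infty$. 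Applying Lemma \ref{lerho2} to each pair and summing, the additivity of the hyperbolic metric along the geodesic $\gamma$ gives
$$\rho_{\Bn}(f(x),f(y)) \le \sum_{i=0}^{N-1}\rho_{\Bn}(f(z_i),f(z_{i+1})) \le c(\varepsilon)\sum_{i=0}^{N-1}\rho_{\Bn}(z_i,z_{i+1}) = c(\varepsilon)\,\rho_{\Bn}(x,y).$$
Since this holds for every $\varepsilon\in(0,1)$ and $c(\varepsilon)={\rm arth}\,\varepsilon/{\rm arth}\frac{\varepsilon}{4L}\to 4L$ as $\varepsilon\to0^+$ (the limit being $4L$ because ${\rm arth}\,t\sim t$ near $0$), letting $\varepsilon\to0^+$ yields $\rho_{\Bn}(f(x),f(y))\le 4L\,\rho_{\Bn}(x,y)$.

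For the lower bound, I would note that since $f$ is an $L$-bilipschitz homeomorphism with respect to $v_{\Bn}$, its inverse $f^{-1}:\Bn\to\Bn$ is also $L$-bilipschitz with respect to $v_{\Bn}$. Applying the upper bound already proved to $f^{-1}$ and evaluating at $u=f(x)$, $w=f(y)$ gives $\rho_{\Bn}(x,y)\le 4L\,\rho_{\Bn}(f(x),f(y))$, that is $\rho_{\Bn}(f(x),f(y))\ge\frac{1}{4L}\rho_{\Bn}(x,y)$. Combining the two inequalities shows that $f$ is $4L$-bilipschitz with respect to the hyperbolic metric.

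The main obstacle I anticipate is the passage from the local statement of Lemma \ref{lerho2} to a global estimate with the clean constant $4L$: this hinges on the additivity of the hyperbolic metric along its geodesics, which lets the telescoping sum collapse to $\rho_{\Bn}(x,y)$ with no loss, together with the fact that the local constant $c(\varepsilon)$ can be driven down to exactly $4L$ by refining the subdivision (equivalently letting $\varepsilon\to0^+$), so that the chaining introduces no extra multiplicative factor. Verifying that the subdivision points can always be chosen to meet \eqref{rhocon} uniformly is routine once $N$ is taken large, and the symmetry of the bilipschitz condition makes the lower bound a formal consequence of the upper one.
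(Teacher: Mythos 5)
Your proposal is correct and follows essentially the same route as the paper: subdividing the hyperbolic geodesic into pieces small enough to satisfy \eqref{rhocon}, applying Lemma \ref{lerho2} to each piece, using additivity of $\rho_{\Bn}$ along the geodesic to collapse the sum, letting $\varepsilon\to 0^+$ so that $c(\varepsilon)\to 4L$, and obtaining the lower bound by applying the upper bound to $f^{-1}$. No substantive differences from the paper's argument.
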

\begin{proof}
For arbitrary $x,y\in\Bn$ such that $x\neq y$, the hyperbolic geodesic segment joining the points $x,y$ is denoted by $J_\rho[x,y]$. Select points $\{z_k\}_{k=0}^n\in J_\rho[x,y]$ with $z_0=x$, $z_n=y$ such that $\rho_{\Bn}(z_k,z_0)>\rho_{\Bn}(z_{k-1},z_0)$ and
$$
{\rm th}\frac{\rho_{\Bn}(z_{k-1},z_k)}{2}\leq \min\left\{\frac{\varepsilon}{4L}\,,\sin\frac{\pi}{8L}\right\},
$$
where $k=1,\cdots,n$ and $\varepsilon\in(0,1)$ is a constant.
By Lemma \ref{lerho2},
$$
\rho_{\Bn}(f(x),f(y))\leq\sum_{k=1}^n\rho_{\Bn}(f(z_{k-1}),f(z_k))\leq c(\varepsilon)\sum_{k=1}^n\rho_{\Bn}(z_{k-1},z_k)=c(\varepsilon) \rho_{\Bn}(x,y),
$$
where $c(\varepsilon)$ is as in Lemma \ref{lerho2}. Then, letting $\varepsilon$ tend to $0$, we obtain
$$
\rho_{\Bn}(f(x),f(y))\leq 4L \rho_{\Bn}(x,y).
$$
Applying the above argument to $f^{-1}$, we get
$$
\rho_{\Bn}(f^{-1}(x),f^{-1}(y))\leq 4L \rho_{\Bn}(x,y)
$$
and hence
$$
\rho_{\Bn}(f(x),f(y))\geq \frac{1}{4L} \rho_{\Bn}(x,y).
$$

This completes the proof.
\end{proof}

\begin{cor}\label{corth21}
Let $f: \Hn\rightarrow  \Hn=f( \Hn)$ be an $L$-bilipschitz map with respect to the visual angle metric. Then $f$ is a $16L$-bilipschitz map with respect to the hyperbolic metric.
\end{cor}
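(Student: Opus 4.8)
The plan is to reduce the half-space case to the ball case already settled in Theorem \ref{thrho} by conjugating $f$ with Möbius transformations. Fix Möbius transformations $g_1\colon\Hn\to\Bn$ and $g_2\colon\Bn\to\Hn$ and set $g=g_1\circ f\circ g_2\colon\Bn\to\Bn$. The first task is to control the visual-angle bilipschitz constant of $g$. For this I would use the fact, noted in the remark following Theorem \ref{vs} and flowing directly from Lemma \ref{mthm1}, that any Möbius transformation $h$ between two domains in $\{\Bn,\Hn\}$ distorts the visual angle metric by at most a factor $2$. Indeed, writing $\rho_G^*(x,y)=\arctan\bigl(\sh\tfrac{\rho_G(x,y)}{2}\bigr)$, the hyperbolic invariance gives $\rho_{G'}^*(h(x),h(y))=\rho_G^*(x,y)$, so the two-sided bound $\rho_G^*\le v_G\le 2\rho_G^*$ of Lemma \ref{mthm1} yields $\tfrac12\,v_G(x,y)\le v_{G'}(h(x),h(y))\le 2\,v_G(x,y)$.

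Composing the three estimates along $g=g_1\circ f\circ g_2$ — a factor $2$ from $g_2$, the factor $L$ from the bilipschitz hypothesis on $f$, and a factor $2$ from $g_1$ — I would conclude that $\tfrac{1}{4L}\,v_{\Bn}(u,w)\le v_{\Bn}(g(u),g(w))\le 4L\,v_{\Bn}(u,w)$ for all $u,w\in\Bn$, i.e. $g$ is $4L$-bilipschitz with respect to the visual angle metric of $\Bn$. Theorem \ref{thrho} applied to $g$ then shows that $g$ is $4\cdot 4L=16L$-bilipschitz with respect to $\rho_{\Bn}$.

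Finally I would transfer the estimate back to $f$. Since $f=g_1^{-1}\circ g\circ g_2^{-1}$ and the Möbius maps $g_1^{-1}\colon\Bn\to\Hn$, $g_2^{-1}\colon\Hn\to\Bn$ are hyperbolic isometries, for $x,y\in\Hn$ I set $u=g_2^{-1}(x)$, $w=g_2^{-1}(y)$ and observe $\rho_{\Hn}(f(x),f(y))=\rho_{\Bn}(g(u),g(w))$ while $\rho_{\Bn}(u,w)=\rho_{\Hn}(x,y)$. Hence the $16L$ bound for $g$ passes verbatim to $f$, giving $\tfrac{1}{16L}\,\rho_{\Hn}(x,y)\le\rho_{\Hn}(f(x),f(y))\le 16L\,\rho_{\Hn}(x,y)$ and completing the proof.

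I do not expect a serious obstacle here, as the argument is a routine conjugation reduction; the one substantive point is the verification that Möbius conjugation costs only a factor $2$ per map in the visual angle metric, which rests on the hyperbolic invariance together with Lemma \ref{mthm1}. Everything else is bookkeeping that exploits the isometry property of $g_1^{-1}$ and $g_2^{-1}$ for the hyperbolic metric, exactly as in the ball-to-ball case.
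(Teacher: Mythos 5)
Your proposal is correct and follows essentially the same route as the paper: conjugate $f$ by Möbius maps to obtain $g=g_1\circ f\circ g_2\colon\Bn\to\Bn$, use Lemma \ref{mthm1} to see that each Möbius conjugation costs a factor $2$ in the visual angle metric (so $g$ is $4L$-bilipschitz), apply Theorem \ref{thrho} to get the $16L$ hyperbolic bound for $g$, and transfer it back to $f$ via the hyperbolic isometry of the Möbius maps. Your write-up is in fact more explicit than the paper's about the factor-$2$ verification and the final transfer step.
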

\begin{proof}
Let $g=g_1\circ f\circ g_2$ where $g_1: \Hn\rightarrow\Bn$ and $g_2: \Bn\rightarrow\Hn$ are two M\"obius transformations. Then $g:\Bn\rightarrow\Bn$ is a $4L$-bilipschitz map with respect to the visual angle metric by Lemma \ref{mthm1}.

Since $f=g^{-1}_1\circ g\circ g^{-1}_2$, by Theorem \ref{thrho} we conclude that $g$ is a $16L$-bilipschitz map with respect to the hyperbolic metric.
\end{proof}

Similarly, we have the following two corollaries.
\begin{cor}\label{corth22}
Let $f: \Hn\rightarrow \Bn=f( \Hn)$ be an $L$-bilipschitz map with respect to the visual angle metric. Then $f$ is an $8L$-bilipschitz map with respect to the hyperbolic metric.
\end{cor}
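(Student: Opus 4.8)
The plan is to reduce the half-space-to-ball mapping to a self-mapping of the ball by composing with a single M\"obius transformation, and then to invoke Theorem \ref{thrho} directly. This is the same strategy as in Corollary \ref{corth21}, but the key observation is that here only one M\"obius transformation is needed rather than two, which is precisely why the constant improves from $16L$ to $8L$.

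First I would fix a M\"obius transformation $g_2\colon \Bn\rightarrow\Hn$ and set $g=f\circ g_2\colon\Bn\rightarrow\Bn$, a self-homeomorphism of the ball. The next step is to control the visual-angle bilipschitz constant of $g$. Since $g_2$ maps between domains in $\{\Bn,\Hn\}$, Lemma \ref{mthm1} together with the M\"obius invariance of $\rho_G$ (hence of $\rho_G^*$) shows that $g_2$ distorts the visual angle metric by at most a factor $2$ in each direction, i.e.
$$\tfrac12\, v_{\Bn}(u,w)\le v_{\Hn}(g_2(u),g_2(w))\le 2\,v_{\Bn}(u,w)$$
for all $u,w\in\Bn$. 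Composing this with the hypothesis that $f$ is $L$-bilipschitz with respect to the visual angle metric immediately yields that $g=f\circ g_2$ is $2L$-bilipschitz with respect to the visual angle metric on $\Bn$.

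Now I would apply Theorem \ref{thrho} to the self-map $g$: being $2L$-bilipschitz with respect to $v$, it is $4\cdot 2L=8L$-bilipschitz with respect to the hyperbolic metric $\rho_{\Bn}$. Finally, since $g_2$ is a hyperbolic isometry between $\Bn$ and $\Hn$, the identity $f=g\circ g_2^{-1}$ shows that $f$ inherits the same hyperbolic bilipschitz constant, so $f$ is $8L$-bilipschitz with respect to the hyperbolic metric, as claimed.

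There is no genuine analytic obstacle in this argument; the substance has already been carried by Theorem \ref{thrho} and by Lemma \ref{mthm1}. The only point requiring care is the bookkeeping of the constants: one must notice that a single conformal change of the domain suffices to turn $f$ into a self-map of $\Bn$ (because $f$ already connects two distinct model domains), which contributes only one factor of $2$ to the visual-angle constant and thereby halves the final hyperbolic constant compared with the $\Hn\rightarrow\Hn$ case of Corollary \ref{corth21}.
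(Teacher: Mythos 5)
Your proposal is correct and follows exactly the route the paper intends: the paper states Corollary \ref{corth22} with only the remark ``Similarly,'' meaning the analogue of the proof of Corollary \ref{corth21} with a single M\"obius transformation $g_2\colon\Bn\to\Hn$, so that $g=f\circ g_2$ is $2L$-bilipschitz for $v$ by Lemma \ref{mthm1}, hence $8L$-bilipschitz for $\rho_{\Bn}$ by Theorem \ref{thrho}, and $f=g\circ g_2^{-1}$ inherits this constant since $g_2$ is a hyperbolic isometry. Your accounting of why the constant halves relative to the $\Hn\to\Hn$ case is exactly the point.
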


\begin{cor}\label{corth23}
Let $f: \Bn\rightarrow \Hn=f(\Bn)$ be an $L$-bilipschitz map with respect to the visual angle metric. Then $f$ is an $8L$-bilipschitz map with respect to the hyperbolic metric.
\end{cor}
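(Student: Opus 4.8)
The plan is to reduce this ball-to-half-space situation to the ball-to-ball case already settled in Theorem \ref{thrho}, by composing $f$ with a single M\"obius transformation on the target side. Let $g_1:\Hn\to\Bn$ be a M\"obius transformation and set $g=g_1\circ f:\Bn\to\Bn$. First I would verify that $g$ is $2L$-bilipschitz with respect to the visual angle metric. By Lemma \ref{mthm1} we have $\rho_G^*(x,y)\le v_G(x,y)\le 2\rho_G^*(x,y)$ with $\rho_G^*(x,y)=\arctan({\rm sh}\frac{\rho_G(x,y)}{2})$ for $G\in\{\Bn,\Hn\}$; since the hyperbolic metric is invariant under M\"obius transformations of $\Hn$ onto $\Bn$, the quantity $\rho_G^*$ is M\"obius invariant as well, and therefore $g_1$ is $2$-bilipschitz in the visual angle metric. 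Composing the $2$-bilipschitz map $g_1$ with the $L$-bilipschitz map $f$ shows that $g$ is $(2L)$-bilipschitz with respect to $v$.

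Next I would apply Theorem \ref{thrho} to the map $g:\Bn\to\Bn$, but with the constant $2L$ in place of $L$. This yields that $g$ is $4\cdot(2L)=8L$-bilipschitz with respect to the hyperbolic metric of $\Bn$. Finally, I would transfer this estimate back to $f$. Writing $f=g_1^{-1}\circ g$ and using once more the M\"obius invariance of the hyperbolic metric under $g_1^{-1}:\Bn\to\Hn$, we obtain $\rho_{\Hn}(f(x),f(y))=\rho_{\Bn}(g(x),g(y))$ for all $x,y\in\Bn$, so the $8L$-bilipschitz bounds for $g$ pass over verbatim to $f$. This gives the assertion.

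The computation is routine; the only point requiring care is the bookkeeping of the factor arising from Lemma \ref{mthm1}, and this is exactly what distinguishes the constant $8L$ here from the constant $16L$ of Corollary \ref{corth21}. In the present ball-to-half-space case a M\"obius correction is needed on only one side, so a single factor $2$ enters, turning the $L$-bilipschitz constant into $2L$ and hence into $4\cdot 2L=8L$ after Theorem \ref{thrho}; by contrast, the half-space-to-half-space case of Corollary \ref{corth21} requires M\"obius maps on both sides, introducing two factors of $2$ and the larger constant $16L$. Getting this single factor right is the step one must attend to in order to match the stated constant.
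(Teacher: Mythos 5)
Your proof is correct and is essentially the argument the paper intends: the paper omits the proof with the word ``Similarly,'' pointing to Corollary \ref{corth21}, and the expected adaptation is precisely your composition $g=g_1\circ f$ with a single M\"obius correction on the target side, the factor $2$ from Lemma \ref{mthm1}, and an application of Theorem \ref{thrho} to the $2L$-bilipschitz map $g$. Your accounting of why only one factor of $2$ enters here (versus two in the half-space-to-half-space case) correctly explains the constant $8L$.
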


\begin{proof}[Proof of Theorem \ref{mths}]
The result immediately follows from Theorem \ref{thrho}, Corollary \ref{corth21},  Corollary \ref{corth22} and Corollary \ref{corth23}.
\end{proof}

\medskip

\bigskip

\subsection*{Acknowledgments}
This research of both authors were supported by the Academy of Finland,
Project 2600066611. The first author was also supported by Turku University Foundation, the Academy of Finland, Project 268009, and Science Foundation of Zhejiang Sci-Tech University(ZSTU). The authors thank Dr. Xiaohui Zhang for useful discussion and helpful comments and the referee for valuable corrections.


\end{document}